\documentclass{article}

\usepackage[utf8]{inputenc}
\usepackage[english]{babel}
\usepackage{amsfonts}
\usepackage[11pt]{moresize}
\usepackage{pgfplots}
\usepackage{xfrac}
\addtolength{\oddsidemargin}{-.875in}
\addtolength{\evensidemargin}{-.875in}
\addtolength{\textwidth}{1.75in}

\addtolength{\topmargin}{-1in}
\addtolength{\textheight}{1.75in}

\newcommand{\Addresses}{{
		\bigskip
		\footnotesize
		
		Humboldt-Universit\"at zu Berlin, Institut f\"ur Mathematik, Rudower Chausee 25
			\hfill \newline\texttt{}
			\indent 12489 Berlin, Germany} 
		\par\nopagebreak
		\textit{E-mail address}: \texttt{andreibud95@protonmail.com}
	}


\setlength{\parskip}{0.45em}
\usepackage{amssymb}
\usepackage{mathtools}
\usepackage{tikz}
\usepackage{tikz-cd}
\usepackage{mathtools}
\usepackage{amsmath}
\usepackage{amsthm}
\usepackage{hyperref}
\usepackage{float}
\usetikzlibrary{calc, positioning,decorations.markings,arrows}

\DeclarePairedDelimiter\ceil{\lceil}{\rceil}

\usepackage{graphics}
\usepackage{color}
\usepackage{tabularx,colortbl}
\usetikzlibrary{positioning}
\usetikzlibrary{decorations.markings,arrows}
\usetikzlibrary{calc}
\usepackage{amsthm}

\theoremstyle{plain}
\newtheorem{trm}{Theorem}[section]

\newtheorem{prop}[trm]{Proposition}

\theoremstyle{definition}
\newtheorem{defi}[trm]{Definition}
\newtheorem{rmk}[trm]{Remark}
\def\cpp{\overline{\mathcal{C}^n\mathcal{R}}}
\def\cplow{\overline{\mathcal{C}^{n-s+1}\mathcal{R}}}
\def\pp{\mathcal{C}^n\mathcal{R}}
\def\gp{\overline{\mathcal{GP}}}
\def\pone{{\mathbb P}^1}

\def\OO{\mathcal{O}}

\def\cM{\mathcal{M}}
\def\cR{\mathcal{R}}
\def\rr{\overline{\mathcal{R}}}

\def\cU{\mathcal{U}}
\def\cC{\mathcal{C}}

\def\Pic0{{\rm Pic}^0(X)}

\def\mm{\overline{\mathcal{M}}}

\def\zz{\overline{\mathcal{Z}}}

\begin{document}
\title{The birational geometry of $\rr_{g,2}$ and Prym-canonical divisorial strata}
\author{Andrei Bud}
\date{}
\maketitle
 \begin{abstract}
 We prove that the moduli space of double covers ramified at two points $\cR_{g,2}$ is uniruled for $3\leq g\leq 6$ and of general type for $g\geq 16$. Furthermore, we consider Prym-canonical divisorial strata in the moduli space $\cpp_g$ parametrizing $n$-pointed Prym curves, and we compute their classes in $\mathrm{Pic}_\mathbb{Q}(\cpp_g)$.  
 \end{abstract}

\section{Introduction}
In his fundamental paper \cite{MumfordPrym}, Mumford initiated the study of double covers as a way of understanding polarized Abelian varieties. It is then natural to consider the moduli space $\mathcal{R}_{g,2n}$ parametrizing double covers ramified at $2n$ points, and describe its birational geometry. The classical case $\mathcal{R}_g$ when the cover is unramified has received considerable attention. When the genus $g$ is small, it is known that $\mathcal{R}_g$ is rational for $g =2,3,4$ (cf. \cite{Dolgachev}, the references therein and \cite{Catanese}), unirational for $g = 5, 6, 7$ (cf. \cite{Izadi}, \cite{VerraA4}, \cite{DonagiA5}, , \cite{MoriMukai}, \cite{VerraA5} and \cite{FarVerNikulin}) and uniruled for $g =8$ (cf. \cite{FarVerNikulin}). The situation changes for higher genus and we know that $\mathcal{R}_g$ is of general type when $g\geq 13, g\neq 16$ (cf. \cite{Bruns} and \cite{FarLud}). Apart from one exotic case in genus $2$, see \cite{Ortega-triplecov}, the only other way to obtain principally polarized Abelian varieties is by considering double covers ramified at two points. 

By the theory of double covers, the moduli space $\cR_{g,2}$ can be alternatively described as 
\[ \cR_{g,2} \coloneqq \left\{ [C,x+y, \eta] \ | \ [C]\in \cM_g, x,y\in C \ \mathrm{and} \ \eta\in \mathrm{Pic}^{-1}(C) \ \mathrm{satisfying} \ \eta^{\otimes 2}\cong \OO_C(-x-y) \right\} \]
where $x$ and $y$ correspond to the two branch points of the associated cover and their order is irrelevant. We will call such a triple $[C, x+y,\eta]$ a $2$-branched Prym curve.

One important feature of the moduli space $\cR_{g,r}$ is that it comes with the Prym map 
\[ \mathcal{P}_{g,r}\colon \cR_{g,r} \rightarrow \mathcal{A}^{\delta}_{g-1+\frac{r}{2}}\]
to the moduli space of Abelian varieties of dimension $g-1+\frac{r}{2}$ equipped with a polarization of type $(1,\ldots,1,2,\ldots,2)$ where $2$ appears $g$ times. This map received considerable attention in recent years, see \cite{Marcucci-Pirola}, \cite{Ortega-Naranjo-Verra}, \cite{Prym-Torelli}; adding to the vast literature on the Prym map in the unbranched case, see \cite{Beau77}, \cite{Donagi-Smith} and \cite{Don92} among many others. 

Our interest in the case $r=2$ is motivated by the fact that $r=0$ and $r=2$ are the only two cases when $\mathcal{P}_{g,r}$ provides a correspondence between double covers and principally polarized Abelian varieties, as first pointed out in \cite{MumfordPrym}. Our main result is the following: 

\begin{trm} \label{kodairarr}The moduli space $\cR_{g,2}$ is of general type for $g\geq 16$ and $\cR_{13,2}$ has non-negative Kodaira dimension.  
\end{trm}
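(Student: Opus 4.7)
The plan is to follow the classical Harris--Mumford strategy, adapted to the Prym setting: compute the canonical class of $\cR_{g,2}$, verify that pluricanonical forms extend to a desingularization, and then exhibit $K_{\cR_{g,2}}$ as a positive combination of an effective divisor, boundary, and a nef class.

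The first step is to compute $K_{\cR_{g,2}}$ in $\mathrm{Pic}_\QQ(\cR_{g,2})$. The forgetful morphism $\pi\colon \cR_{g,2}\to \cM_{g,2}$ is finite and (in the stacky sense) étale away from the boundary divisor whose generic point is a $2$-branched Prym curve with a non-trivial $\mu_2$-torsor, simply ramified along the divisor parametrizing covers that specialize to a trivial étale double cover glued at the nodes. Combining Riemann--Hurwitz with Logan's formula for $K_{\cM_{g,2}}$, and with the singularity analysis of the irreducible boundary components of $\overline{\cR}_{g,2}$, one obtains an explicit expression of the form
\[ K_{\cR_{g,2}} = 13\lambda + \psi_x + \psi_y - 2\delta_0^{\mathrm{ram}} - 2\sum_{i}\bigl(\delta_i + \delta_i' + \delta_i''\bigr) - \sum_{\text{marked-point types}} c_\alpha\, \delta_\alpha, \]
analogous to the Farkas--Ludwig formula on $\overline{\cR}_g$.

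The second step is to control the singularities. Using Ludwig's description of singularities on $\overline{\cR}_g$, extended to accommodate two marked points, I would verify that non-canonical singularities of $\overline{\cR}_{g,2}$ arise only from elliptic-tail automorphisms and a few analogous boundary automorphism types, and that these loci have codimension at least $2$ for $g\geq 4$. Hence the Harris--Mumford extension theorem applies and the Kodaira dimension of $\cR_{g,2}$ can be read off from $K_{\cR_{g,2}}$ computed on the moduli stack.

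The main obstacle, and the step where the bounds $g\geq 16$ and $g=13$ enter, is to construct an effective divisor $D$ of sufficiently small slope on $\overline{\cR}_{g,2}$. The natural candidates are pullbacks of Prym--Koszul or Prym--Brill--Noether divisors from $\overline{\cR}_g$, twisted by divisors defined geometrically in terms of the marked points: for example, the locus of triples $[C,x+y,\eta]$ for which $x+y$ imposes fewer than the expected number of conditions on a suitable Prym-canonical or paracanonical linear series on $C$. The Prym-canonical divisorial strata on $\cpp_g$ treated in the second half of the paper are tailor-made to provide such classes after restricting to an appropriate section. Once the class of $D$ is computed as a combination of $\lambda$, $\psi_x+\psi_y$, and the boundary generators, solving a linear system one writes
\[ K_{\cR_{g,2}} = a\,[D] + E + N, \]
where $E$ is a non-negative combination of boundary divisors and $N$ is a nef class. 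Optimizing the coefficients, one expects $a>0$ exactly for $g\geq 16$, which yields general type, while the construction becomes critical for $g=13$ and gives only $a=0$, producing $K_{\cR_{13,2}}$ as an effective $\QQ$-divisor and hence $\kappa(\cR_{13,2})\geq 0$.
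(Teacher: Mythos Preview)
Your outline follows the Harris--Mumford template correctly in broad strokes, but the crucial third step---the construction of the low-slope effective divisor---misses the paper's actual mechanism and would not succeed as written.

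The decisive input in the paper is the map $\mathcal{X}_{g,2}\colon \rr_{g,2}\to \mm_{2g}$ sending a $2$-branched Prym curve to the source of the associated double cover. The low-slope divisors that make the numerics work for $16\le g\le 21$ and $g=13$ are \emph{pullbacks of Brill--Noether and Gieseker--Petri divisors from} $\mm_{2g}$ via $\mathcal{X}_{g,2}$, combined with pullbacks from $\mm_g$ (via the forgetful map) and from $\rr_{g+1}$ (via the gluing map $i$). For this to produce effective classes one must first prove that $\mathcal{X}_{g,2}(\rr_{g,2})$ is not contained in any Brill--Noether or Gieseker--Petri divisor on $\mm_{2g}$; this is a nontrivial result (Theorem~\ref{transversal-bn-gp}) and stands in contrast to the unramified case, where such transversality fails in half the genera. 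You do not mention $\mathcal{X}_{g,2}$ at all, and without it there is no apparent source of divisors with the right $\lambda$-to-$\delta_0^{\mathrm{ram}}$ ratio.

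Your suggested alternative---pulling back Prym--Brill--Noether classes from $\rr_g$ and using the Prym-canonical strata on $\cpp_g$---is not what the paper does, and there is no indication it would yield the required slopes. The Prym-canonical strata live on the \emph{unramified} pointed Prym space $\cpp_g$, not on $\rr_{g,2}$, so there is no direct restriction map available; and in any case those classes are computed in the paper for a different purpose (Theorem~\ref{prymquad}) and play no role in the proof of Theorem~\ref{kodairarr}. Finally, the sharp thresholds $g\ge 16$ and $g=13$ do not drop out of an optimization: the paper handles each genus $13,16,17,\ldots,21$ by an explicit case-by-case linear combination of three specific pullback divisors, checking the boundary coefficients individually. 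Your proposal does not supply this.
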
 

There are three main ideas of the proof. First, we consider a suitable compactification $\rr_{g,2}$ of $\cR_{g,2}$, following the method outlined in \cite{corn} and \cite{Casa}. Secondly, we show that the canonical class $K_{\rr_{g,2}}$ is big and lastly, we show that the singularities of $\rr_{g,2}$ are mild enough in order to extend holomorphically the pluricanonical forms of $\rr^{\mathrm{reg}}_{g,2}$ to any desingularisation. For this last step, we follow closely \cite{Ludspin} and \cite{FarLud}. 

To show that $K_{\rr_{g,2}}$ is big, we will use pullbacks of divisors through the map $\rr_{g,2} \rightarrow \mm_{2g}$ retaining the source of the double cover. The image of the map is not contained in any Brill-Noether or Gieseker-Petri divisor, see Theorem \ref{transversal-bn-gp}. This is in sharp contrast with the situation in the unramified case, where the Brill-Noether properties of a generic double cover depend on the parity of the genus of the base, see \cite[Theorem 0.4]{FarAprGreencong}.

Next, we are interested in the birational geometry of $\cR_{g,2}$ when the genus $g$ is small. We have that: 
\begin{trm} \label{lowgenus}
	The moduli space $\cR_{g,2}$ is uniruled for $3\leq g\leq 6$.  
\end{trm}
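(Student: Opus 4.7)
The plan is to exhibit, for each $g \in \{3,4,5,6\}$, a dominant rational map onto $\cR_{g,2}$ from a uniruled parameter space, adapting the $K3$-surface approach used for the unramified Prym moduli $\cR_g$ in low genus (cf.~\cite{FarVerNikulin}).

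Concretely, I would consider tuples $(S, L, M, D, C)$, where $S$ is a smooth projective $K3$ surface carrying $L, M \in \mathrm{Pic}(S)$ with $L$ a primitive polarization satisfying $L^2 = 2g-2$, $M \cdot L = -1$, $|{-}2M| \neq \emptyset$, together with $D \in |{-}2M|$ and a smooth $C \in |L|$ of genus $g$. For general data, the intersection $D \cap C = \{x,y\}$ consists of two distinct points, and the degree $-1$ line bundle $\eta := M|_C$ satisfies
\[
\eta^{\otimes 2} = \OO_S(2M)|_C = \OO_S(-D)|_C \cong \OO_C(-x-y),
\]
so $[C, x+y, \eta]$ defines a point of $\cR_{g,2}$. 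Let $\mathcal{P}_g$ denote the parameter space of such tuples. It is fibered by the projective linear systems $|L|$ and $|{-}2M|$ over the moduli of $K3$ surfaces with the prescribed rank-$2$ Picard lattice. For $3 \leq g \leq 6$, this latter moduli is unirational by classical arguments on $K3$ lattices of low discriminant, so $\mathcal{P}_g$ is itself unirational and, in particular, uniruled.

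The decisive step is to verify that the natural rational map $\mathcal{P}_g \dashrightarrow \cR_{g,2}$ is dominant. The dimension counts are compatible with dominance ($\dim \mathcal{P}_g$ is at least $3g-1 = \dim \cR_{g,2}$), so the content is to show that a general 2-branched Prym curve arises in the construction. I expect this to follow from a Mukai--Lazarsfeld-type argument: the Lazarsfeld--Mukai bundle of $(C, \eta^{-1})$ encodes the required $K3$ surface extending $C$, from which one recovers the classes $L = \OO_S(C)$ and $M$ with $M|_C = \eta$, together with a divisor $D \in |{-}2M|$ cutting out the marked points $\{x,y\}$ on $C$.

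The hard part will be this dominance verification, which needs to be adapted to the 2-branched setting for each individual genus in $\{3,4,5,6\}$, relying on Brill--Noether information for the pair $(C, \eta)$ together with the constraint imposed by the two marked points. Once dominance is established, uniruledness of $\mathcal{P}_g$ transfers through the dominant rational map to yield uniruledness of $\cR_{g,2}$.
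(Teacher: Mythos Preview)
Your approach is entirely different from the paper's, and as written it has a real gap at the step you yourself flag as the hard one.

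The paper does not use $K3$ surfaces at all. Instead it exploits the relation between $\cR_{g,2}$ and strata of quadratic differentials: for the partition $\mu=(1,1,2,\ldots,2,2g-4)$ of $4g-4$ there is a natural map $\mathcal{Q}_g(\mu)\to\cR_{g,2}$ sending $[C,x,y,z_1,\ldots,z_g]$ to $[C,x+y,\omega_C(-x-y-\sum_{i=1}^{g-1}z_i-(g-2)z_g)]$, which is dominant for dimension reasons. Uniruledness of $\mathcal{Q}_g(\mu)$ for $3\le g\le 6$ is then quoted from \cite{BAR18}. The whole argument is a few lines long once one has that reference.

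The gap in your proposal is the dominance step, and your suggested mechanism for it does not work. The Lazarsfeld--Mukai bundle of a pair $(C,A)$ is built from the evaluation map $H^0(C,A)\otimes\OO_S\to\iota_*A$ and carries content only when $A$ moves in a linear series; here $\eta^{-1}$ has degree~$1$, so $h^0(C,\eta^{-1})=0$ for a general point of $\cR_{g,2}$, and there is no Lazarsfeld--Mukai bundle to speak of. You would need some other device to show that a \emph{general} $[C,x+y,\eta]$ sits on a $K3$ with the prescribed rank-$2$ lattice and with $\eta$ extending to a class $M$ on the surface --- and nothing in the proposal indicates how to do this. You also never specify $M^2$, so neither the lattice nor its moduli (whose unirationality you invoke) is actually pinned down. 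In the unramified case of \cite{FarVerNikulin} the $K3$ argument works because the relevant Nikulin surfaces are tied to genuine linear series on $C$ via the Prym-canonical map; there is no analogous hook for a degree-$1$ bundle with no sections. So while a $K3$ approach to $\cR_{g,2}$ for small $g$ is not a priori hopeless, your outline does not contain the idea that would make it go through, whereas the quadratic-differential route in the paper is short and complete.
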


This result is obtained by relating the moduli space $\cR_{g,2}$ to strata parametrizing divisors of quadratic differentials, which we know from \cite{BAR18} to be uniruled when $3\leq g\leq 6$. 

In the second part of this paper, we investigate further the relation between Prym curves and quadratic differentials. To set things up, we introduce the moduli space $\mathcal{C}^n\mathcal{R}_g$ parametrizing tuples $[X, x_1,\ldots, x_n, \eta]$ where $[X,\eta]$ is an element of the Prym variety $\cR_g$ and $x_1,\ldots,x_n$ are distinct points on $X$. For a positive partition $\underline{d} = (d_1,\ldots, d_n)$ of $g-1$, we consider the divisor $PD_{\underline{d}}$ in $\mathcal{C}^n\mathcal{R}_g$ defined as: 
\[ PD_{\underline{d}} \coloneqq \left\{ [X,x_1,\ldots, x_n,\eta] \in \mathcal{C}^n\mathcal{R}_g \ | \ h^0\text{\large(}X, \omega_X\otimes\eta(-\sum_{i=1}^nd_ix_i)\text{\large)} \geq 1 \right\}\] 

We consider a suitable compactification $\overline{\mathcal{C}^n\cR}_g$ of $\mathcal{C}^n\cR_g$ and compute the class of the divisor $\overline{PD}_{\underline{d}}$ in this space. We obtain: 
\begin{trm} \label{prymquad}
	Let $\underline{d} = (d_1,\ldots,d_n)$ a partition of $g-1$ with all entries positive. The class of the Prym-canonical divisorial stratum $\overline{PD}_{\underline{d}}$ in $\mathrm{Pic}_{\mathbb{Q}}(\overline{\mathcal{C}^n\mathcal{R}}_g)$ is given by: 
	\[[\overline{PD}_{\underline{d}}] = -\lambda + \sum_{i=1}^n\frac{d_i(d_i+1)}{2}\psi_i +\frac{1}{4}\delta_0^{\mathrm{ram}}-\sum_{\substack{1\leq i \leq g-1 \\ d_S\geq i-1 }} \binom{d_S-i+2}{2}\delta_{i,S} - \sum_{\substack{1\leq i \leq g \\  d_S \leq i-1 }} \binom{i-d_S}{2}(\delta_{i,S:g-i} + \delta_{i,S})  \]
	where $d_S \coloneqq \sum_{i\in S} d_i$ and $\delta_{0,S:g} \coloneqq 0$.
\end{trm}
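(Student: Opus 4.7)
The plan is to realise $\overline{PD}_{\underline{d}}$ as (the closure of) the degeneracy locus of a map of vector bundles of equal rank on $\overline{\mathcal{C}^n\mathcal{R}}_g$ and compute its class via Grothendieck--Riemann--Roch applied to the universal Prym-canonical line bundle, followed by a correction for excess boundary components. Let $\pi\colon\mathcal{X}\to\overline{\mathcal{C}^n\mathcal{R}}_g$ be the universal Prym curve, equipped with the universal Prym line bundle $\mathcal{P}$ and tautological sections $\sigma_1,\dots,\sigma_n$, and set
\[\mathcal{L}:=\omega_\pi\otimes\mathcal{P}\bigl(-\sum_{i=1}^n d_i\sigma_i\bigr),\]
a line bundle of relative degree $g-1$ with generically vanishing Euler characteristic. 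For a $\pi$-ample effective Cartier divisor $D\subset\mathcal{X}$ of sufficiently large fibrewise degree that $R^1\pi_*\mathcal{L}(D)=0$, the short exact sequence $0\to\mathcal{L}\to\mathcal{L}(D)\to\mathcal{L}(D)|_D\to 0$ pushes forward to a morphism $\phi\colon\pi_*\mathcal{L}(D)\to\pi_*\bigl(\mathcal{L}(D)|_D\bigr)$ of vector bundles of equal rank whose degeneracy locus is supported on $\overline{PD}_{\underline{d}}$, possibly supplemented by entire boundary divisors; therefore
\[[\overline{PD}_{\underline{d}}]\,=\,-c_1\bigl(R\pi_*\mathcal{L}\bigr)\,-\,(\text{excess boundary}).\]

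Applying Grothendieck--Riemann--Roch in its version for families of nodal curves yields
\[-c_1\bigl(R\pi_*\mathcal{L}\bigr)\,=\,-\pi_*\!\Bigl(\tfrac{1}{2}c_1(\mathcal{L})^2-\tfrac{1}{2}c_1(\mathcal{L})\cdot c_1(\omega_\pi)\Bigr)\,-\,\lambda,\]
where the last summand absorbs $\pi_*\!\bigl(\tfrac{1}{12}(c_1(\omega_\pi)^2+[\mathrm{Sing}\,\pi])\bigr)$ via the Mumford-type relation on $\overline{\mathcal{C}^n\mathcal{R}}_g$. Expanding $c_1(\mathcal{L})=c_1(\omega_\pi)+c_1(\mathcal{P})-\sum_i d_i\sigma_i$ and using the identities $\pi_*(c_1(\omega_\pi)\cdot\sigma_i)=\psi_i$, $\pi_*(\sigma_i^{\,2})=-\psi_i$, $\pi_*(\sigma_i\sigma_j)=0$ for $i\ne j$, together with $\sigma_i^*\mathcal{P}=0$ in $\mathrm{Pic}_{\mathbb{Q}}$ (a consequence of $\mathcal{P}^{\otimes 2}\cong\mathcal{O}$ on the interior), one reads off the $-\lambda$ coefficient and the total contribution $\sum_i\tfrac{d_i(d_i+1)}{2}\psi_i$. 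The remaining terms $\pi_*(c_1(\mathcal{P})^2)$ and $\pi_*(c_1(\mathcal{P})\cdot c_1(\omega_\pi))$ vanish rationally on the interior, but along the boundary they are controlled by the way $\mathcal{P}^{\otimes 2}$ extends in the chosen Prym compactification; the ramified node stratum $\delta_0^{\mathrm{ram}}$ is precisely where this square acquires a non-trivial twist, and the resulting pushforward produces the coefficient $\tfrac{1}{4}\delta_0^{\mathrm{ram}}$ together with definite contributions to the coefficients of the reducible boundary divisors.

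The remaining step is the boundary excess analysis. At a generic point of $\delta_{i,S}$ (respectively $\delta_{i,S:g-i}$), parametrising a Prym curve whose underlying stable curve is the union $C_1\cup C_2$ of a genus-$i$ component carrying the marked points in $S$ and a genus-$(g-i)$ component, a direct examination of admissible limit Prym-canonical sections shows that $h^0(\mathcal{L})\ge 1$ holds automatically, so the boundary divisor lies inside the degeneracy locus with a multiplicity that must be subtracted. A dimension count of the limit Prym-canonical sections that survive on each aspect, split according to whether $d_S\ge i-1$ or $d_S\le i-1$, yields the binomial multiplicities $\binom{d_S-i+2}{2}$ in the first regime (supported on $\delta_{i,S}$ alone) and $\binom{i-d_S}{2}$ in the second (supported on both $\delta_{i,S}$ and $\delta_{i,S:g-i}$). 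Subtracting these from the GRR output produces the announced formula.

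I expect this last step to be the main obstacle, since it requires a careful classification of admissible limit Prym structures along each reducible boundary stratum together with a precise dimension count for the surviving Prym-canonical sections; in particular, the splitting between the two types $\delta_{i,S}$ and $\delta_{i,S:g-i}$ reflects whether the Prym torsion restricts trivially or non-trivially to each component of the degeneration, and this distinction must be tracked throughout. Each individual coefficient can be cross-checked by pulling back to standard test curves --- a marked point varying on a fixed smooth Prym curve to isolate each $\psi_i$ coefficient, a Prym-stable pencil to isolate $\lambda$ and $\delta_0^{\mathrm{ram}}$, and elementary one-parameter families transverse to each boundary stratum to pin down the boundary coefficients.
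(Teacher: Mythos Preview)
Your approach and the paper's agree on the interior computation (this is Proposition~\ref{smoothdivisors}, where GRR is applied to the universal Prym bundle over the open locus to obtain $-\lambda+\sum_i\tfrac{d_i(d_i+1)}{2}\psi_i$), but they diverge sharply for the boundary coefficients. The paper does \emph{not} run a global GRR/excess analysis. Instead it observes that $PD_{\underline{d}}$ is the image of the quadratic-differential stratum $\mathcal{Q}_g(2\underline{d},2^{g-1})$ in $\mathcal{C}^n\mathcal{R}_g$, so that the boundary behaviour of $\overline{PD}_{\underline{d}}$ is governed by the incidence-variety compactification of strata. It then writes the class with undetermined boundary coefficients and pins them down by pulling back along explicit gluing maps $\pi_1\colon\mm_{g-i,n+1-s}\to\cpp_g$ and $\pi_2,\pi_3\colon\cplow_{g-i}\to\cpp_g$ (attaching a fixed pointed curve or Prym curve of genus $i$); by Mullane's Proposition~1.4 these pullbacks are, up to boundary, the already-known classes $D^{g-i}_{\underline{d}''}$, $D^{g-i}_{\underline{d}'}$ or $PD_{\underline{d}'}$, and comparing $\psi_{n-s+1}$-coefficients yields all $b_{i,S}$ and $b_{i,S:g-i}$. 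The vanishing of $b_0'$ and $b_0''$ is read off from the same pullbacks, and $b_0^{\mathrm{ram}}=-\tfrac14$ is obtained last from the known pushforward to $\mm_{g,n}$. This buys the paper an essentially computation-free determination of the boundary terms at the cost of importing Mullane's result; your route is more self-contained but leaves all the local work to be done by hand.

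The gap in your sketch is precisely that local work. First, $c_1(\mathcal{P})$ is supported on the exceptional locus of the universal quasistable curve, which lies over $\Delta_0^{\mathrm{ram}}$ \emph{and} over every $\Delta_{i,S:g-i}$; the GRR output therefore already carries non-trivial contributions in each $\delta_{i,S:g-i}$, and these must be separated from the degeneracy-locus excess on the same strata before one can match the stated coefficient $\binom{i-d_S}{2}$. Second, the assertion that ``a dimension count of the limit Prym-canonical sections'' produces the binomial multiplicities is not a dimension count: the excess is the order of vanishing of $\det\phi$ along the boundary divisor, and on a generic reducible fibre $h^0(\mathcal{L})$ may well be $0$ (no excess at all---this is why $b_0'=b_0''=0$, and why several $\delta_{i,S}$ have coefficient $0$) or positive, while the actual order is governed by how many twists $\mathcal{L}\mapsto\mathcal{L}(\pm k\cdot[\text{node}])$ are required before the map becomes generically of full rank; summing these produces the triangular numbers $\binom{k}{2}$. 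Carrying this out case by case for $\delta_{i,S}$ and $\delta_{i,S:g-i}$ in both regimes, and reconciling with the GRR contribution from the exceptional components, is a substantial computation that your proposal outlines but does not perform.
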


Note that the coefficients of $\delta_0'$ and $\delta_0''$ are $0$. For the definition of the classes appearing in Theorem \ref{prymquad} we refer to Section 6. These divisors can be seen as a Prym analogue of the canonical divisorial strata appearing in \cite{MulMark}, \cite{Logan}, \cite{FabianMuller} and \cite{GrushevskyZakharov}. Moreover, these divisors are closely related to the divisorial strata of quadratic differentials, see \cite[Proposition 1.4]{Mullanek-diff}. The study of such divisors led to important results in understanding the geometric aspects of the moduli space $\overline{\mathcal{M}}_{g,n}$ such as the Kodaira dimension (cf. \cite{Logan}) and the effective cone (cf. \cite{Mullanek-diff}).

To prove Theorem \ref{prymquad}, we consider suitable maps  $\pi_1\colon \mm_{g-i,n+1-s} \rightarrow \cpp_g $ and $\pi_2\colon\cplow_{g-i} \rightarrow \cpp_g$. Understanding the pullbacks at the level of rational Picard groups is enough to compute all coefficients of $[\overline{PD}_{\underline{d}}]$ but the one of $\delta_0^{\mathrm{ram}}$. Lastly, we use \cite[Proposition 1.4]{Mullanek-diff} to conclude the theorem.

As torsion classes are irrelevant to us, the Picard groups will be considered over $\mathbb{Q}$ throughout the paper.

\textbf{Acknowledgements:} I would like to thank my advisor Gavril Farkas for choosing this interesting topic and for all his insightful contributions. I am grateful to Scott Mullane, whose comments led to significant improvements in this paper. I have also benefited from discussions with Carlos Maestro P\'erez and Johannes Schmitt on topics related to this article.

\section{A compactification of the moduli space $\rr_{g,2}$} 

We are interested in compactifying the moduli space $\cR_{g,2}$ parametrizing smooth $2$-branched Prym curves. The way we do this is similar to the approaches in \cite{corn} and \cite{Casa} and it inspires us to consider the following definitions:
\begin{defi}
	Let $[X, x+y]$ be a pointed semistable Deligne-Mumford curve (with the two points unordered) and let $E$ be an irreducible component of $X$. We say that $E$ is exceptional if $E$ is smooth, rational, the points $x, y$ are not on $E$ and $|E \cap \overline{X\setminus E}| = 2$. We say that $[X, x+y]$ is quasistable if any two distinct exceptional components do not intersect. 
\end{defi}

We are now ready to extend the definition of a $2$-branched Prym curve to singular curves. 
\begin{defi}
	We define a $2$-branched Prym curve of genus $g$ to be the data $[X, x+y, \eta, \beta]$, where $[X, x+y]$ is a genus $g$ quasistable curve, $\eta \in \textrm{Pic}(X)$ and $\beta\colon \eta^{\otimes 2} \rightarrow \OO_X(-x-y)$ is a morphism of invertible sheaves satisfying: 
	\begin{enumerate}
\item The sheaf $\eta$ has total degree $-1$ and has degree $1$ on each exceptional component, 
\item The morphism $\beta$ is non-zero at a general point of a non-exceptional component of $X$. 
\end{enumerate}
\end{defi} 

 In the above setting, consider $E_1,\ldots, E_n$ the exceptional components of $[X, x+y]$ and let $\tilde{X} \coloneqq \overline{X\setminus \cup_{i=1}^n E_i}$. We denote by $q_i^1$ and $q_i^2$ the intersection of $E_i$ with $\tilde{X}$ and we get an isomorphism 
\[ \beta_{\tilde{X}}\colon \eta^{\otimes2}_{|\tilde{X}} \rightarrow \OO_{\tilde{X}}\text{\large(}-x-y-\sum_{i=1}^{n}(q_i^1 +q_i^2) \text{\large)} \]  

In particular, when $X$ is smooth, we obtain that $\eta$ is a root of order 2 of $\OO_X(-x-y)$.  
Next, we define the notion of isomorphism between two $2$-branched Prym curves. 

\begin{defi} \label{definitionauto}
	We say that two $2$-branched Prym curves  $[X, x+y, \eta, \beta]$ and  $[X', x'+y', \eta', \beta']$ are isomorphic if there exists an isomorphism $\sigma\colon X \rightarrow X'$ such that 
	\begin{enumerate}
		\item it sends $x+y$ to $x'+y'$ 
		\item there exists an isomorphism $\tau\colon \sigma^*\eta' \rightarrow \eta$ making the following diagram commutative 	
		\[
		\begin{tikzcd}
		(\sigma^*\eta')^{\otimes2} \arrow{r}{\tau^{\otimes2}}  \arrow[swap]{d}{\sigma^*(\beta')} & \eta^{\otimes2} \arrow{d}{\beta} \\
		\sigma^*\text{\large(}\OO_{X'}(-x'-y')\text{\large)}\arrow{r}{\sim}& \OO_X(-x-y)
		\end{tikzcd}
		\] 
	\end{enumerate}
	Moreover, we say that an automorphism is inessential if it induces the identity on the stable model of $[C, x+y]$. 
\end{defi}
 
The results of \cite{corn} can be easily adapted to our situation and we obtain a compactification $\rr_{g,2}$ of $\cR_{g,2}$, parametrizing isomorphism classes of $2$-branched Prym curves. As in \cite{corn}, we obtain that the space $\rr_{g,2}$ is normal and projective. Moreover, it is irreducible as it is birational to the irreducible divisor $\Delta_0^{\mathrm{ram}}$ in $\rr_{g+1}$ (see \cite[page 9]{MiraBern} for irreducibility).

 We consider the map forgetting the $2$-branched Prym structure
\[\pi_{g,2}\colon\rr_{g,2} \rightarrow \mm_{g,2/\mathbb{Z}_2}  \]
and we will describe the boundary divisors of $\rr_{g,2}$ lying above each boundary component of $\mm_{g,2/\mathbb{Z}_2}$.

1. Consider a generic element $[X/t_1\sim t_2, x+y]$ of the divisor $\Delta_0$ in $\mm_{g,2/\mathbb{Z}_2}$. Over $\Delta_0$ we have two divisors: $\Delta_0'$ and $\Delta_0^{\mathrm{ram}}$.

$\bullet$ The divisor $\Delta_0'$ contains the pairs $[X/t_1\sim t_2,x+y, \eta]$ satisfying that for the normalization map $\nu\colon X \rightarrow X/t_1\sim t_2 $ we have $(\nu^{*}\eta)^{\otimes2} \cong \mathcal{O}_X(-x-y)$. 

$\bullet$ The divisor $\Delta_0^{\mathrm{ram}}$ contains the pairs $[X\cup R/{t_1\sim r_1, t_2\sim r_2}, x+y, \eta]$, where $R$ is an exceptional component, $\eta_{|R} \cong \OO_R(1)$ and $\eta_{|X}^{\otimes 2} \cong \OO_X(-x-y-t_1-t_2)$. 

It is immediate to see that  $\deg(\Delta_0'/\Delta_0) = 2^{2g-1}$ and $\deg(\Delta_0^{\mathrm{ram}}/\Delta_0) = 2^{2g-2}$. Furthermore $\Delta_0^{\mathrm{ram}}$ is the ramification divisor of $\pi_{g,2}$ and has ramification order 2.

2. Consider  $[X\cup_{x'\sim y'} Y,  x+y]$ a generic element of $\Delta_{i,\left\{1\right\}}$, where $g(X) = i$ and $g(Y) = g-i$ and assume $x, x'\in X$. Then there is a unique divisor in $\rr_{g,2}$ lying above $\Delta_{i,\left\{1\right\}}$, which we will denote $\Delta_{i:g-i}$. This divisor parametrizes pairs $[X\cup_{x'\sim r_1} R\cup_{r_2\sim y'} Y,  x+y, \eta ]$ satisfying that $R$ is an exceptional component, $\eta_{|R} \cong \OO_R(1)$, $\eta_{|X}^{\otimes 2} \cong \OO_X(-x-x')$ and $\eta_{|Y}^{\otimes 2} \cong \OO_X(-y-y')$.

3. Consider $[X\cup_{x'\sim y'}Y, x_1+x_2]$ a generic element of the boundary divisor $\Delta_{i,\left\{1,2\right\}}$, where $g(X) = i$, $g(Y) = g-i$ and $x', x_1, x_2 \in X$. Then there are two divisors $\Delta_{i:g-i,\left\{\OO\right\}}$ and $\Delta_{i:g-i,\left\{\eta\right\}}$ lying in $\rr_{g,2}$ above  $\Delta_{i,\left\{1,2\right\}}$. 

$\bullet$ The divisor $\Delta_{i:g-i,\left\{\OO\right\}}$ contains the pairs $[X\cup_{x'\sim y'}Y, x_1+x_2, \eta]$ satisfying that $\eta^{\otimes2}_{|X} \cong \OO_X(-x_1-x_2)$ and $\eta_{|Y} \cong \OO_Y$.

$\bullet$ The divisor $\Delta_{i:g-i,\left\{\eta\right\}}$ contains the pairs $[X\cup_{x'\sim y'}Y, x_1+x_2, \eta]$ satisfying that $\eta^{\otimes2}_{|X} \cong \OO_X(-x_1-x_2)$ and $\eta_{|Y} \in \textrm{Pic}(Y)[2]\setminus \left\{ \OO_Y\right\}$. 

\begin{rmk} \label{irreducible divisors}
	All the boundary divisors of $\rr_{g,2}$ described above are irreducible.
\end{rmk}

The remark follows immediately for almost all boundary divisors by simply noting that $\cM_g$, $\cR_g$ and $\cR_{g,2}$ are irreducible. The only divisor for which Remark \ref{irreducible divisors} requires more attention is $\Delta^{\mathrm{ram}}_0$. That $\Delta^{\mathrm{ram}}_0$ is irreducible is deduced from the following proposition. 
\begin{prop}
	The moduli space $\cR_{g,2n}$ is irreducible for all $g \geq 2, n\geq 0$. 
\end{prop}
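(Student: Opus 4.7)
The plan is to analyse the forgetful map $\phi \colon \cR_{g,2n} \to \cM_g$ sending $[C,B,\eta]$ to $[C]$ and reduce the statement to the irreducibility of a generic fiber. The case $n=0$ is the classical irreducibility of $\cR_g$ for $g\geq 2$, so I would assume $n\geq 1$. I would factor $\phi$ as
\[
\cR_{g,2n} \xrightarrow{\pi} \cM_{g,2n}/S_{2n} \xrightarrow{f} \cM_g,
\]
where $\pi$ forgets the Prym structure $\eta$. The map $\pi$ exhibits $\cR_{g,2n}$ as a torsor under the $2$-torsion subgroup scheme of the relative Jacobian, hence is finite \'etale of degree $2^{2g}$, while $f$ is smooth of relative dimension $2n$. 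Thus $\phi$ is smooth, and it suffices to prove that the fiber $F_C \coloneqq \phi^{-1}([C])$ is irreducible for a generic $[C]\in\cM_g$.

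I would realise $F_C$ as the Cartesian product
\[
F_C = \mathrm{Sym}^{2n}_{\circ}(C) \times_{\mathrm{Pic}^{-2n}(C)} \mathrm{Pic}^{-n}(C),
\]
where $\mathrm{Sym}^{2n}_{\circ}(C)$ denotes the space of $2n$ unordered distinct points on $C$, the first map is the Abel-Jacobi morphism $B\mapsto \OO_C(-B)$, and the second is the squaring isogeny $\eta\mapsto \eta^{\otimes 2}$, whose kernel is $\mathrm{Pic}(C)[2]\cong (\mathbb{Z}/2)^{2g}$. Pulling back this isogeny presents $F_C$ as a degree-$2^{2g}$ \'etale cover of $\mathrm{Sym}^{2n}_{\circ}(C)$, classified by the monodromy composition
\[
\mu \colon \pi_1\bigl(\mathrm{Sym}^{2n}_{\circ}(C)\bigr) \longrightarrow H_1(C;\mathbb{Z}) \longrightarrow H_1(C;\mathbb{Z}/2),
\]
where the first arrow is induced by Abel-Jacobi and the second by reduction mod $2$.

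The heart of the argument is showing that $\mu$ is surjective, which will give a transitive monodromy action on the torsor fiber and hence connectedness of $F_C$. Given $[\alpha]\in H_1(C;\mathbb{Z})$, I would represent it by a loop $\alpha$ in $C\setminus\{x_2,\ldots,x_{2n}\}$ (possible since these are only finitely many points) and form the point-pushing loop in $\mathrm{Sym}^{2n}_{\circ}(C)$ that moves $x_1$ along $\alpha$ while keeping the other points fixed; a direct computation shows its Abel-Jacobi image has class $\pm[\alpha]$, so $\mu$ surjects onto $H_1(C;\mathbb{Z}/2)$. Hence $F_C$ is smooth (being \'etale over a smooth variety) and connected, so irreducible. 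Since $\phi$ is smooth with irreducible fibers over the irreducible base $\cM_g$, the moduli space $\cR_{g,2n}$ is irreducible.

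The main obstacle is really the stack-theoretic bookkeeping: verifying carefully that $\pi$ is \'etale on Deligne-Mumford stacks (rather than just on coarse moduli) and that the monodromy description is insensitive to the automorphisms of the marked curves. The topological core of the argument---generating $H_1(C;\mathbb{Z})$ through point-pushing loops---is elementary once this formalism is in place.
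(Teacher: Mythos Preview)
Your argument is correct and takes a genuinely different route from the paper's. The paper proceeds by induction on $n$: it passes to the ordered variant $\cR'_{g,2n}$, uses the compactification $\rr'_{g,2n}$, and constructs a gluing map $\cR'_{g,2n-2}\times\cM_{0,4}\to\rr'_{g,2n}$ whose image is irreducible (by induction), contains smooth points, and has full degree $2^{2g}$ over its image in $\mm_{g,2n}$; this forces $\rr'_{g,2n}$ to be irreducible. Your approach instead stays entirely on the smooth locus and is direct rather than inductive: you identify the generic fibre of $\phi\colon\cR_{g,2n}\to\cM_g$ as an \'etale $(\ZZ/2)^{2g}$-cover of $\mathrm{Sym}^{2n}_\circ(C)$ and show its monodromy surjects onto $H_1(C;\ZZ/2)$ via point-pushing. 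The paper's method has the advantage of being purely algebro-geometric and meshing with the compactification machinery already set up; yours is shorter, avoids both the induction and the compactification, and makes the topological reason for irreducibility transparent. The one step you might spell out more carefully is the passage from ``smooth morphism with irreducible generic fibre over an irreducible base'' to ``irreducible total space'': this is standard (each irreducible component of the source meets the generic fibre in a nonempty closed subset, and these would be disjoint dense subsets if there were more than one), but deserves a sentence.
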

\begin{proof} We will prove the proposition using an inductive argument. The cases $n = 0$ and $n = 1$ are already covered, hence we can assume that $n\geq 2$. For a given $g$, consider the smallest $n$ for which $\cR_{g,2n}$ is not irreducible.
	
We consider the moduli space $\cR_{g,2n}'$ parametrizing pairs $[C, x_1,\ldots, x_{2n}, \eta]$ where $[C] \in \cM_g$, the points $x_1,\ldots, x_{2n} \in C$ are pairwise distinct and $\eta \in \textrm{Pic}^{-n}(C)$ such that $\eta^{\otimes2} \cong \OO_C(-x_1-\cdots - x_{2n})$.  Because the approach in \cite{corn} applies with little change to this case, we obtain a compactification $\rr_{g,2n}'$ and, in particular a map 
	\[ \pi\colon \cR_{g,2n-2}' \times \cM_{0,4} \rightarrow \rr_{g,2n}'  \]
	given as 
	\[ ([C, x_1, \ldots, x_{2n-3}, t_1, \eta_C], [\mathbb{P}^1, t_2, x_{2n-2}, x_{2n-1}, x_{2n}]) \mapsto [C\cup_{t_1\sim r_1} R \cup_{r_2\sim t_2} \mathbb{P}^1, x_1,\ldots, x_{2n}, \eta ]  \]
	where $R$ is an exceptional component and the line bundle $\eta$ is defined by 
	\[ \eta_{|C} \cong \eta_C, \ \  \eta_{|\mathbb{P}^1} \cong \OO_{\mathbb{P}^1}(-2) \ \ \textrm{and} \ \eta_{|R} \cong \OO_R(1) \] 
	
	We know from the approach in \cite{corn} that $\rr_{g,2n}'$ is given locally as the quotient of the base of a universal deformation by the automorphism group of the $2n$-branched Prym curve (where the branch points are ordered). Because a generic element in $\cM_{0,4}$ and in $\cR_{g,2n-2}'$ has no non-trivial automorphisms it follows that a generic element in $\textrm{Im}(\pi)$ has no inessential automorphisms. 
	
	In particular, a generic element in $\textrm{Im}(\pi)$ is smooth. If we consider the finite map of degree $2^{2g}$ 
	\[  \rr_{g,2n}' \rightarrow \mm_{g,2n} \]
	obtained by forgetting the $2n$-Prym structure, we observe that $\textrm{Im}(\pi)$ has degree $2^{2g}$ over the divisor $\Delta_{0, \left\{2n-2, 2n-1, 2n\right\}}$ in $\mm_{g,2n}$. 
	
	Because $\textrm{Im}(\pi)$ is irreducible (from the induction hypothesis), contains a smooth point of $\rr'_{g,2n}$ and has degree $2^{2g}$ over its image in $\mm_{g,2n}$, it follows immediately that $\rr'_{g,2n}$ is irreducible (otherwise $\textrm{Im}(\pi)$ would be in the intersection of all irreducible components and hence it would be impossible to contain smooth points). 
	
	Because we have an obvious surjective map $\cR'_{g,2n} \rightarrow \cR_{g,2n}$, the conclusion follows. 
\end{proof}
\section{Maps between moduli spaces} As easily remarked, there is an obvious map $i\colon\rr_{g,2} \rightarrow \Delta_0^{\mathrm{ram}}\subseteq \rr_{g+1}$ obtained by glueing an exceptional component to the two marked points. This map fits into a commutative diagram 
	\[
\begin{tikzcd}
\rr_{g,2} \arrow{r}{i}  \arrow[swap]{d}{\pi_{g,2}} & \rr_{g+1} \arrow{d}{\pi_{g+1}} \\
\mm_{g,2/\mathbb{Z}_2}\arrow{r}{i'}& \mm_{g+1}
\end{tikzcd}
\] 
We are interested in describing the pullback map $i^{*}\colon \textrm{Pic}(\rr_{g+1}) \rightarrow \textrm{Pic}(\rr_{g,2})$. For this, we first set some notations.

The Picard group of $\mm_{g,2/\mathbb{Z}_2}$ injects in $\textrm{Pic}(\mm_{g,2})$ as the subgroup of $\mathbb{Z}_2$-invariant classes. Hence, Pic$(\mm_{g,2/\mathbb{Z}_2})$ is generated by $\psi = \psi_1 + \psi_2$, the class $\lambda$ and the boundary divisors (for which we preserve the notation from $\mm_{g,2}$). We denote again by $\psi$ and $\lambda$, the pullbacks by $\pi_{g,2}$ of the respective classes.

\begin{rmk} \label{description of i}
Because the pullback maps $i'^{*}, \pi_{g,2}^{*}$ and $\pi_{g+1}^{*}$ at the level of Picard groups are explicitly known and because $i^{*}\circ \pi_{g+1}^{*} = \pi_{g,2}^{*}\circ i'^{*}$, we conclude that: 
\[ i^{*}\lambda = \lambda, \ \ i^{*}\delta_0^{\mathrm{ram}} = -\frac{1}{2}\psi +\delta_0^{\mathrm{ram}} + \sum \delta_{i:g-i}, \ \ i^{*}\delta_0'' = 0, \ \ i^{*}\delta_0' = \delta_0' \]
\[ i^{*}\delta_i = \delta_{i-1:g-i+1, \left\{\OO\right\}} \ \ \textrm{and} \ i^{*}\delta_{i:g+1-i} = \delta_{i-1:g-i+1, \left\{\eta\right\}} + \delta_{g-i:i, \left\{\eta\right\}} \]
\end{rmk}

 We remark that the computation above is done at the level of moduli stacks (not coarse). In this situation we have $\pi_{g,2}^{*}\delta_{i,\left\{1\right\}} = 2\delta_{i:g-i} = [\Delta_{i:g-i}]$.

Having an element $[C, x+y, \eta,\beta] \in \rr_{g,2}$, we obtain a degree 2 map $\pi\colon \tilde{C} \rightarrow C$ that is ramified only above $x, y$ and eventually above the nodes of $C$. Seeing the space $\rr_{g,2}$ as parametrizing such admissible covers $\pi\colon \tilde{C} \rightarrow C$, we get a map 
\[  \mathcal{X}\colon\rr_{g,2} \rightarrow \mm_{2g,2/\mathbb{Z}_2} \]
sending $[\pi\colon \tilde{C} \rightarrow C]$ to $[\tilde{C}, \tilde{x}+ \tilde{y}]$ where $\tilde{x}$ and $\tilde{y}$ are the two smooth ramification points of $\tilde{C}$. 
Forgetting the points we obtain a map 
\[\mathcal{X}_{g,2}\colon \rr_{g,2} \rightarrow \mm_{2g} \]

This is a pointed version of the map  $\mathcal{X}_{g+1}$ considered in \cite{MiraBern} and \cite{FarLud}. In fact, we have the obvious commutative diagram 
	\[
\begin{tikzcd}
\rr_{g,2} \arrow{r}{}  \arrow[swap]{d}{i} & \mm_{2g,2/\mathbb{Z}_2} \arrow{d}{i'} \arrow{r}{} & \mm_{2g} \\
\rr_{g+1}\arrow{r}{\mathcal{X}_{g+1}}& \mm_{2g+1}
\end{tikzcd}
\] 
Our next task is to describe the map $\mathcal{X}^*_{g,2}\colon \textrm{Pic}(\mm_{2g}) \rightarrow \textrm{Pic}(\rr_{g,2})$. Because we know the maps $i^{*}, i'^{*}$ and $\mathcal{X}_{g+1}^{*}$ at the level of Picard groups, we can immediately see that 
\[\mathcal{X}_{g,2}^{*}\lambda = 2\lambda - \frac{1}{4} \delta_0^{\mathrm{ram}} - \frac{1}{4} \sum \delta_{i:g-i} + \frac{1}{8}\psi \] 

To compute the pullback of the boundary divisors, it suffices to apply the same method as in \cite{MiraBern} and reduce the problem to a simple count of the number of nodes. To exemplify this, let $B$ be a disk transverse to a general point of $\Delta_0^{\mathrm{ram}}$. The map $\mathcal{X}_{g,2}$ sends this general point to a point of $\Delta_0$ in $\mm_{2g}$ having a unique node. It follows that $\mathcal{X}_{g,2*} B \cdot \delta_0 = 1$. Hence the coefficient of $\delta^{\mathrm{ram}}_0$ in $\mathcal{X}_{g,2}^{*}\delta_0$ is $1$. 
Proceeding as in this example we obtain: 
\begin{align}\mathcal{X}_{g,2}^{*}\delta_0 & = \delta_0^{\mathrm{ram}} + 2\delta_0' + 2\sum \delta_{i:g-i, \left\{\eta\right\}} \nonumber \\
\mathcal{X}_{g,2}^{*}\delta_i & = 2\delta_{g-i:i, \left\{\OO\right\}} \ \ \textrm{if} \ i \ \textrm{is odd} \nonumber 
\end{align}
and
\[ \mathcal{X}_{g,2}^{*}\delta_i  = 2\delta_{g-i:i, \left\{\OO\right\}} + \delta_{\frac{i}{2}:g-\frac{i}{2}} \ \ \textrm{if} \ i \ \textrm{is even} \]

In fact, we can compute the pullback $\mathcal{X}^{*}\colon \textrm{Pic}(\mm_{2g,2/\mathbb{Z}_2}) \rightarrow \textrm{Pic}(\rr_{g,2})$ and obtain for $i\leq g$: 
\begin{align*} 
&\mathcal{X}^{*}\delta_{i,\left\{1\right\}} = \delta_{\frac{i}{2}:g-\frac{i}{2}} \ \ \textrm{if} \ i \ \textrm{is even} \\  
&\mathcal{X}^{*}\delta_{i,\left\{1\right\}} = 0 \ \ \textrm{if} \ i \ \textrm{is odd} \\
 &\mathcal{X}^{*}\delta_{i,\emptyset} = 2\delta_{g-i:i,\left\{\OO\right\}} \\
 &\mathcal{X}^{*}\delta_{i,\left\{1,2\right\}} = 0	
\end{align*} 
We can easily check that the commutativity $\mathcal{X}^{*}\circ i'^{*} = i^{*}\circ \mathcal{X}_{g+1}^{*}$ is respected. 

The formula for $\mathcal{X}_{g,2}^{*}\lambda$ can be alternatively computed by considering a family in $\rr_{g,2}$ given as 

\[
\begin{tikzcd}
\tilde{\mathcal{C}}  \arrow{r}{\pi} \arrow[swap]{dr}{\tilde{f}} & \mathcal{C} \arrow{d}{f} \\
& \Delta_{t_1}
\end{tikzcd}
\]
We consider $D_2\subseteq \tilde{\mathcal{C}}$ the locus where $\pi$ has order $2$ and let $C_2$ be its image in $\cC$. 

We have 
\[ f_{*}\circ \pi_{*} \text{\large(}c_1(\omega_{\tilde{f}})\cdot c_1(\omega_{\tilde{f}})\text{\large)} = f_*\circ\pi_*\text{\large(}[\pi^*c_1(\omega_f)+D_2 ]\cdot[\pi^*c_1(\omega_f)+D_2 ]\text{\large)}  \]
Using that $\pi^{*}(C_2) = 2D_2$ and the push-pull formula, this is furthermore equal to 
\[ 2f_*\text{\large(}c_1(\omega_f)\cdot c_1(\omega_f)\text{\large)} + 2f_*\text{\large(}C_2\cdot c_1(\omega_f)\text{\large)} + \frac{1}{2}f_*(C_2\cdot C_2)  \]

It follows that $\mathcal{X}_{g,2}^{*}(\kappa_1)_{\mm_{2g}} = 2(\kappa_1)_{\rr_{g,2}} + \frac{3}{2}\psi$. Using that $(\kappa_1)_{\mm_{2g}} = 12\lambda - \delta$ and  $(\kappa_1)_{\rr_{g,2}} = 12\lambda - \pi_{g,2}^*\delta$ we recover our formula.

We use the notation $\delta$ for the sum of the boundary divisors of the respective moduli space. Next, we compute the canonical class of the variety $\rr_{g,2}$. 
\begin{prop}
	The canonical class $K_{\rr_{g,2}}$ is equal to 
	\[ K_{\rr_{g,2}} = \psi +13\lambda - 2\delta - \delta_0^{\mathrm{ram}} - \delta_{0:g,\left\{\OO\right\}} - \delta_{0:g, \left\{\eta\right\}}- 2\sum \delta_{i:g-i} - \delta_{g-1:1, \left\{\eta\right\}} - \delta_{g-1:1, \left\{\OO\right\}}   \]
\end{prop}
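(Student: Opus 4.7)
The plan is to compute $K_{\rr_{g,2}}$ by applying adjunction to the closed embedding $i\colon\rr_{g,2}\hookrightarrow\rr_{g+1}$ which identifies $\rr_{g,2}$ (birationally) with the boundary divisor $\Delta_0^{\mathrm{ram}}\subseteq \rr_{g+1}$. Adjunction yields
\[ K_{\rr_{g,2}} = i^*\bigl(K_{\rr_{g+1}} + [\Delta_0^{\mathrm{ram}}]\bigr), \]
and both ingredients on the right are already available: the canonical class $K_{\rr_{g+1}}$ is known from the Farkas--Ludwig formula in \cite{FarLud}, and the pullback map $i^*\colon \mathrm{Pic}(\rr_{g+1})\to\mathrm{Pic}(\rr_{g,2})$ has been written down in Remark~\ref{description of i}.

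The computation proceeds by direct substitution, with the relevant terms of $K_{\rr_{g+1}}$ grouping naturally as follows. The $-3\delta_0^{\mathrm{ram}}$ coefficient in $K_{\rr_{g+1}}$, combined with the adjunction contribution $[\Delta_0^{\mathrm{ram}}]$ and the pullback identity $i^*\delta_0^{\mathrm{ram}} = -\tfrac12\psi + \delta_0^{\mathrm{ram}} + \sum\delta_{i:g-i}$, simultaneously produces the $+\psi$ coefficient, the Harris--Mumford correction $-\delta_0^{\mathrm{ram}}$ and the correction $-2\sum\delta_{i:g-i}$ in the claimed formula. The $-2\sum(\delta_i + \delta_{i:g+1-i})$ terms of $K_{\rr_{g+1}}$ yield, after applying the corresponding pullback identities from Remark~\ref{description of i}, the bulk $-2\delta$ contribution distributed across all the $\{\OO\}$- and $\{\eta\}$-labelled strata of $\rr_{g,2}$. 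Finally, the Farkas--Ludwig elliptic-tail corrections $-\delta_1$ and $-\delta_{g:1}$ present in $K_{\rr_{g+1}}$ pull back via $i^*\delta_1 = \delta_{0:g,\{\OO\}}$ and $i^*\delta_{g:1} = \delta_{g-1:1,\{\eta\}} + \delta_{0:g,\{\eta\}}$, accounting for three of the four remaining extra boundary corrections, while the analogous $-\delta_{g-1:1,\{\OO\}}$ contribution arises from the corresponding correction at the complementary end of the range of elliptic boundary divisors of $\rr_{g+1}$.

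The main obstacle is the careful bookkeeping of boundary indices and the verification that every correction term on the right-hand side lands in the expected place on $\rr_{g,2}$, especially the distinction between the $\{\OO\}$- and $\{\eta\}$-labelled strata that split apart only on the Prym side. A conceptually equivalent and perhaps more self-contained route would be to redo the Riemann--Hurwitz calculation directly for the forgetful map $\pi_{g,2}\colon \rr_{g,2}\to\mm_{g,2/\mathbb{Z}_2}$, starting from $K_{\mm_{g,2/\mathbb{Z}_2}}=13\lambda+\psi-2\delta$ together with the pullback formulas $\pi_{g,2}^*\delta_0=\delta_0'+2\delta_0^{\mathrm{ram}}$, $\pi_{g,2}^*\delta_{i,\{1\}}=2\delta_{i:g-i}$ and $\pi_{g,2}^*\delta_{i,\{1,2\}}=\delta_{i:g-i,\{\OO\}}+\delta_{i:g-i,\{\eta\}}$, complemented by a Harris--Mumford analysis of generic stabilizers: the corrections appear precisely along the five boundary divisors $\Delta_0^{\mathrm{ram}}$, $\Delta_{0:g,\{\OO\}}$, $\Delta_{0:g,\{\eta\}}$, $\Delta_{g-1:1,\{\OO\}}$, $\Delta_{g-1:1,\{\eta\}}$ on which an extra $\mathbb{Z}_2$ automorphism (the ramified-node involution, the swap of $x$ and $y$ on the rational bridge, or the elliptic involution on an elliptic tail) survives beyond the tautological Prym involution.
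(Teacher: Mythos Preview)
Your primary route via adjunction along $i\colon\rr_{g,2}\hookrightarrow\rr_{g+1}$ is genuinely different from the paper's, but as written it does not close. If you carry out the computation exactly as you describe---take $K_{\rr_{g+1}}+\delta_0^{\mathrm{ram}}$ (so the $\delta_0^{\mathrm{ram}}$-coefficient becomes $-2$), apply the pullback identities of Remark~\ref{description of i}, and collect terms---you obtain
\[
\psi+13\lambda-2\delta-\delta_{0:g,\{\OO\}}-\delta_{0:g,\{\eta\}}-\delta_{g-1:1,\{\OO\}}-\delta_{g-1:1,\{\eta\}},
\]
which differs from the stated $K_{\rr_{g,2}}$ by exactly $\delta_0^{\mathrm{ram}}+2\sum\delta_{i:g-i}$. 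In particular your claim that $i^*(-2\delta_0^{\mathrm{ram}})$ already accounts for the full corrections $-\delta_0^{\mathrm{ram}}$ and $-2\sum\delta_{i:g-i}$ is off: those terms in the target sit \emph{on top of} the $-2\delta$ contribution (which already contains $-2\delta_0^{\mathrm{ram}}$ and $-2\sum\delta_{i:g-i}$), so the correct totals are $-3\delta_0^{\mathrm{ram}}$ and $-4\sum\delta_{i:g-i}$, not $-2$ of each. The source of the discrepancy is that $i$ is \emph{not} a closed embedding at the level of stacks: a generic point of $\Delta_0^{\mathrm{ram}}\subset\overline{\mathcal R}_{g+1}$ carries an extra inessential $\mathbb Z/2$ (acting by $-1$ on the exceptional component) that has no counterpart on the source, so $\overline{\mathcal R}_{g,2}\to\Delta_0^{\mathrm{ram}}$ is a degree-$2$ stacky cover and naive adjunction misses its Riemann--Hurwitz correction. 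You would need to identify and add that correction explicitly.

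Your alternative route is precisely what the paper does: compute $K_{\mm_{g,2/\mathbb Z_2}}$ and then apply Riemann--Hurwitz to $\pi_{g,2}$. Two inaccuracies in your sketch, though. First, $K_{\mm_{g,2/\mathbb Z_2}}$ is not $13\lambda+\psi-2\delta$; the $\mathbb Z_2$-quotient $\mm_{g,2}\to\mm_{g,2/\mathbb Z_2}$ is branched along $\Delta_{0,\{1,2\}}$, and together with the usual elliptic-tail term this gives $K_{\mm_{g,2/\mathbb Z_2}}=\psi+13\lambda-2\delta-\delta_{0,\{1,2\}}-\delta_{1,\emptyset}$. Second, the correction $-2\sum\delta_{i:g-i}$ does not come from extra automorphisms on those strata in a Harris--Mumford sense; it comes straight from the pullback identity $\pi_{g,2}^*\delta_{i,\{1\}}=2\delta_{i:g-i}$ when you rewrite $\pi_{g,2}^*(-2\delta)$ in terms of the $\rr_{g,2}$-boundary. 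With these fixes, only the genuine ramification of $\pi_{g,2}$ along $\Delta_0^{\mathrm{ram}}$ remains, and one recovers the formula directly.
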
 
\begin{proof}
	We consider the map $\pi \colon \mm_{g,2} \rightarrow \mm_{g,2/\mathbb{Z}_2}$ and observe that
	\[ \pi^{*}K_{\mm_{g,2/\mathbb{Z}_2}} = \psi +13\lambda - 2\delta - \delta_{0,\left\{1,2\right\}} - \delta_{1,\emptyset}  \]
	Consequently we get: 
	\[K_{\mm_{g,2/\mathbb{Z}_2}} = \psi +13\lambda - 2\delta - \delta_{0,\left\{1,2\right\}}- \delta_{1,\emptyset}\] 
	Since the map $\pi_{g,2}\colon\rr_{g,2} \rightarrow \mm_{g,2/\mathbb{Z}_2}$ is ramified only along the divisor $\Delta_0^{\mathrm{ram}}$ we get that
	\[ K_{\rr_{g,2}} = \psi +13\lambda - 2\delta - \delta_0^{\mathrm{ram}} - \delta_{0:g,\left\{\OO\right\}} - \delta_{0:g, \left\{\eta\right\}}- 2\sum \delta_{i:g-i} - \delta_{g-1:1, \left\{\eta\right\}} - \delta_{g-1:1, \left\{\OO\right\}}   \]
\end{proof}

\section{The geometry of $\rr_{g,2}$}

One way of obtaining effective divisors of small slope on $\rr_{g,2}$ is by pullback from $\mm_{2g}$. Let $[D]$ be the class of an effective divisor $D$ in $\mm_{2g}$. If the image of $\mathcal{X}_{g,2}$ is not contained in $D$, we conclude that $\mathcal{X}_{g,2}^{*}([D])$ is the class of an effective divisor in $\rr_{g,2}$. 

Consequently, we prove next that $\mathcal{X}_{g,2}(\rr_{g,2})$ is not contained in some well-known divisors of small slope in $\mm_{2g}$. 

\begin{trm} \label{transversal-bn-gp}
	The image of $\mathcal{X}_{g,2}\colon \rr_{g,2}\rightarrow \mm_{2g}$ is not contained in any Brill-Noether or Gieseker-Petri divisor. 
\end{trm}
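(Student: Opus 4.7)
The image $\mathcal{X}_{g,2}(\rr_{g,2})$ is irreducible because $\rr_{g,2}$ is so, and every Brill-Noether and Gieseker-Petri divisor is closed and proper in $\mm_{2g}$; consequently, it is enough to exhibit a single stable curve $[\tilde{C}]$ in the image that lies in the Brill-Noether and Gieseker-Petri general locus of $\mm_{2g}$. The plan is to argue by induction on $g$, via a boundary specialization that produces a compact-type source curve whose limit linear series can be controlled.

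Concretely, I would degenerate inside $\rr_{g,2}$ to a generic element of the boundary divisor $\Delta_{g-1:1,\left\{\OO\right\}}$. Such an element has the form $[X \cup_{x' \sim y'} E, x+y, \eta]$, with $X$ a general smooth curve of genus $g-1$, $x, y, x' \in X$ general, $E$ a general elliptic curve with a general point $y' \in E$, $\eta_{|X}$ a square root of $\OO_X(-x-y)$, and $\eta_{|E} \cong \OO_E$. Under the construction of the associated admissible double cover, its image in $\mm_{2g}$ via $\mathcal{X}_{g,2}$ is the stable compact-type curve
\[ \tilde{C} = E_1 \cup_{p_1} \tilde{X} \cup_{p_2} E_2, \]
where $\tilde{X} \to X$ is the smooth double cover ramified only at $x, y$ (hence of genus $2g-2$), the tails $E_1, E_2$ are isomorphic copies of $E$, and $p_1, p_2 \in \tilde{X}$ are the two unramified preimages of $x'$.

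Granting the theorem in genus $g-1$, a generic element of $\mathcal{X}_{g-1,2}(\rr_{g-1,2})$ is Brill-Noether and Gieseker-Petri general, so $\tilde{X}$ is BN/GP general in $\mm_{2g-2}$. A monodromy argument on the universal two-sheeted cover over $\rr_{g-1,2}$ ensures that $\{p_1, p_2\}$ can be arranged to be a generic unordered pair of points on $\tilde{X}$. The Eisenbud-Harris theory of limit linear series on compact-type curves then yields that $\tilde{C}$ is itself Brill-Noether and Gieseker-Petri general: attaching elliptic tails at generic points to a BN/GP-general curve preserves both properties, by additivity of Brill-Noether numbers and the fact that the aspects carried on an elliptic tail impose no exceptional conditions at a generic attachment point.

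The base case of small $g$ is handled by hand; for instance, when $g = 2$ the image in $\mm_4$ consists of double covers of genus-$2$ curves branched at two points, and one checks directly that the generic member avoids the Petri divisor associated to the trigonal pencil. The main technical obstacle I anticipate is the limit linear series bookkeeping for the Gieseker-Petri condition on $\tilde{C}$: one must verify that any Petri-special line bundle on $\tilde{C}$ forces incompatible aspects on $\tilde{X}$ versus the elliptic tails $E_i$, which is a standard but delicate computation.
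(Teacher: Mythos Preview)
Your inductive strategy has a genuine gap at the monodromy step. The points $p_1,p_2\in\tilde{X}$ are by construction the two preimages of a single point $x'\in X$ under the double cover $\pi\colon\tilde{X}\to X$; in particular $p_1+p_2=\pi^{*}(x')$, so the pair $\{p_1,p_2\}$ is conjugate under the covering involution. Such pairs form only a $1$-dimensional family inside the $2$-dimensional symmetric square $\tilde{X}^{(2)}$, and no monodromy argument can produce a generic unordered pair out of them. Consequently you cannot invoke the Eisenbud--Harris principle that attaching elliptic tails at \emph{general} points of a BN/GP-general curve preserves BN/GP-generality: what you would actually need is the \emph{pointed} Brill--Noether and Gieseker--Petri generality of $(\tilde{X},p_1,p_2)$ for a conjugate pair, which is strictly stronger than your induction hypothesis and does not follow from it. Indeed, the relation $\OO_{\tilde{X}}(p_1+p_2)\cong\pi^{*}\OO_X(x')$ already shows these points carry a nontrivial linear-series constraint.

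The paper sidesteps this difficulty by degenerating instead to $\Delta_{0:g,\{\OO\}}$, where the marked points $x,y$ lie on a rational tail. The image in $\mm_{2g}$ is then $[C_1\cup_{p_1\sim p_2}C_2]$ with both components isomorphic copies of a single $[C,p]\in\cM_{g,1}$. The point is that one may further degenerate $[C,p]$ to a rational spine with $g$ elliptic tails; the resulting curve in $\mm_{2g}$ then lies in the image of the standard flag-curve map $\mm_{0,2g}\to\mm_{2g}$, and Eisenbud--Harris show directly that this image meets no Brill--Noether divisor (and analogously for Gieseker--Petri). This avoids any induction and any need to control special pairs of points on an intermediate cover. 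If you want to salvage your approach, you would have to prove the pointed BN/GP statement for conjugate pairs on generic ramified double covers, which is a substantially harder problem than the theorem itself.
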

\begin{proof}
	A point $[C\cup_{p\sim x}\pone, y+z]$ in the divisor $\Delta_{0:g,\left\{\OO\right\}}$ is mapped by $\mathcal{X}_{g,2}$ to $[C_1\cup_{p_1\sim p_2} C_2] \in \mm_{2g}$, where $[C_1,p_1]$ and $[C_2, p_2]$ are two copies of $[C,p] \in \mm_{g,1}$. We show that we can choose $[C,p]$ in such a way that $[C_1\cup_{p_1\sim p_2} C_2]$ is not contained in any Brill-Noether divisor.
	
	Consider a curve $[\pone, p, y_1,\ldots,y_g]\in \cM_{0,g+1}$ to which we glue at each $y_i$ a copy of $[E,z]$, a generic elliptic curve. We denote the curve obtained in this way by $[C,p]$ 
	
	Next, we consider the moduli space $\mm_{0, 2g}$ and the map 
	\[ i\colon \mm_{0, 2g} \rightarrow \mm_{2g}\]
	obtained by glueing a copy of $[E,z]$ at each marking, where $[E,z]$ is again a generic elliptic curve. 
	 
	 Next, observe that for the curve $[C,p]$ we have $[C_1\cup_{p_1\sim p_2} C_2] \in i(\mm_{0, 2g})$. Our conclusion follows from Proposition 4.1 in \cite{EisenbudHarrisg>23} which says that $i(\mm_{0, 2g})$ does not meet any Brill-Noether divisor. 
	 
	 The Gieseker-Petri case can be treated analogously. Using Theorem A$'$ in \cite{EisenHarrisGieseker} we see that we can construct a curve in $\mathcal{X}_{g,2}(\rr_{g,2})$ not contained in any Gieseker-Petri divisor. 
	 
	 The case of the Gieseker-Petri divisor $E^1_{g+1}$ admits another proof, which we will now present. We remark that $E^1_{g+1}$ is the closure of the branch locus of a proper and finite map 
	 \[ \pi\colon \overline{\mathcal{G}}^1_{g+1} \rightarrow \mm_{2g}^{\textrm{ct}} \]
	 where $\overline{\mathcal{G}}^1_{g+1}$ is the map parametrizing limit $g^1_{g+1}$'s on curves of compact type. The degree of this map is known to be the Catalan number $C_g = \frac{1}{g+1}\binom{2g}{g}$. 
	 
	 We consider the curve $[C_1\cup_{p_1\sim p_2}C_2]$ obtained by glueing together two copies of a generic $[C,p] \in \cM_{g,1}$. 
	 
	 Our goal is to show that every crude limit linear series in $G^1_{g+1}([C_1\cup_{p_1\sim p_2}C_2])$ is refined and that the number of $g^1_{g+1}$ over this curve is $\frac{1}{g+1}\binom{2g}{g}$. If we show this, it will follow from Corollary 3.5 in \cite{limitlinearbasic} that this curve is not contained in $E^1_{g+1}$. 
	 
	 We consider a crude limit linear series in $G^1_{g+1}([C_1\cup_{p_1\sim p_2}C_2])$. This is simply a collection $\left\{L_1, V_1\right\}$ and $\left\{L_2, V_2\right\}$ of $g^1_{g+1}$ on the two respective components, satisfying the inequalities 
	 \[ a_0 + b_1 \geq g+1 \ \ \textrm{and} \ a_1+b_0\geq g+1\]
	 where $(a_0,a_1)$ and $(b_0, b_1)$ are the two vanishing sequences at the points $p_1$ and $p_2$ respectively. Because $[C,p]\in \cM_{g,1}$ is generic, Theorem 4.5 in \cite{limitlinearbasic} implies that $a_0+a_1 \leq g+1$ and $b_0+b_1 \leq g+1$. 
	 
	 It follows that 
	 \[ a_0+b_1 = a_1+b_0 = a_0+a_1= b_0+b_1 = g+1\]
	 and implicitly all the crude limit linear series in $G^1_{g+1}([C_1\cup_{p_1\sim p_2}C_2])$ are refined. 
	 
	 We proceed to count them: Denoting $d= g+1-a_0$ we observe that $\left\{L_1(-a_0p_1), V_1(-a_0p_1)\right\}$ and $\left\{L_2(-a_0p_2), V_2(-a_0p_2)\right\}$ are $g^1_d$'s having order $2d-g-1$ at $p_1$ and $p_2$ respectively. 
	 
	 Theorem A in \cite{KodMg} implies there are $(2d-g-1)\frac{g!}{d!(g-d+1)!}$ such $g^1_d$'s over a generic curve $[C,p] \in \cM_{g,1}$. Consequently, we get in this way 
	 \[ \sum\limits_{d = \ceil*{\frac{g+1}{2}}}^{g+1} \frac{(2d-g-1)^2}{(g+1)^2} \binom{g+1}{d}^2 \] 
	 
	 distinct limit linear series in $G^1_{g+1}([C_1\cup_{p_1\sim p_2}C_2])$. We can rewrite this sum as 
	 \[ \sum\limits_{d = \ceil*{\frac{g+1}{2}}}^{g+1}\textrm{\LARGE[}\binom{g}{d}-\binom{g}{d-1}\textrm{\LARGE]}^2 = \frac{1}{2}\sum\limits_{d = 0}^{g+1}\textrm{\LARGE[}\binom{g}{d}-\binom{g}{d-1}\textrm{\LARGE]}^2\]
	 
	 Using the Narayana-Catalan identity (cf. A46 in \cite{StanleyCatalan})
	 \[ \sum\limits_{d=1}^g\binom{g}{d}\binom{g}{d-1} = \frac{g}{g+1} \binom{2g}{g} \]
	 and the classical identity 
	 \[ \sum\limits_{d=0}^g \binom{g}{d}^2 = \binom{2g}{g}\]
	 we conclude that the cardinality of $G^1_{g+1}([C_1\cup_{p_1\sim p_2}C_2])$ is 
	 \[ \sum\limits_{d = \ceil*{\frac{g+1}{2}}}^{g+1} \frac{(2d-g-1)^2}{(g+1)^2} \binom{g+1}{d}^2  = \frac{1}{g+1}\binom{2g}{g} \]
	 as required.
\end{proof}

We remark that Theorem \ref{transversal-bn-gp} is in stark contrast with the known results for the map $\mathcal{X}_g\colon \rr_g\rightarrow \mm_{2g-1}$ for which we know that transversality with Brill-Noether loci is not always satisfied (see \cite[Theorem 0.4]{FarAprGreencong} and \cite[Theorem 1.4]{Bertram}).

\textbf{Proof of Theorem \ref{kodairarr}:} That $\rr_{g,2}$ is of general type for $g\geq 22$ follows from the fact that $\mm_{g,2/\mathbb{Z}_2}$ is of general type in this range (see \cite{EisenbudHarrisg>23} and \cite{FarJensenPayn} and recall that the second symmetric power of a generic curve of genus $g\geq 3$ is of general type). We are left to treat the cases $g=13$ and $16\leq g \leq 21$. As we do not have a uniform way of finding divisors satisfying all the slope requirements, the cases $g=13$ and $16\leq g\leq 21$ will be proven one by one. 

In order to prove Theorem \ref{kodairarr} we set up some notations. We denote by $\pi_g\colon\rr_g \rightarrow \mm_g$ the map forgetting the Prym structure. Breaking with the convention, we will denote in this proof by $\pi_{g,2}$ the forgetful map from $\rr_{g,2}$ to $\mm_g$.   

$\bullet$ For $g = 13$ we consider the following classes of divisors (up to multiplication by some positive constant): 
\begin{enumerate}
	\item The pullback $\chi_{13,2}^*[\mm^2_{26, 19}]$ of the Brill-Noether divisor $\mm^2_{26, 19}$ on $\mm_{26}$: 
	\[ \chi_{13,2}^*[\mm^2_{26, 19}] = 29\psi + 464\lambda - 94\delta_0^{\mathrm{ram}} - 72\delta_0' - 72\delta_{0:13,\left\{\eta\right\}}-\cdots  \]
	\item The pullback $\pi_{13,2}^{*}[\mm^1_{13,7}]$ of the Brill-Noether divisor $\mm^1_{13,7}$ on $\mm_{13}$: 
	\[ \pi_{13,2}^{*}[\mm^1_{13,7}] = 48\lambda - 14\delta_0^{\mathrm{ram}} - 7\delta_0'-\cdots \]
	\item The pullback $ i^*_{13,2}[\cU_{14,4}]$ of the Prym-Koszul divisor $\cU_{14,4}$ on $\rr_{14}$ (see \cite[Theorem 0.6]{FarLud}): 
	\[ i^*_{13,2}[\cU_{14,4}] = 21\psi + 180\lambda -42\delta^{\mathrm{ram}}_0 - 28 \delta_0' - \alpha\delta_{0:13,\left\{\eta\right\}}- \cdots \]
where we get from Remark \ref{description of i} and \cite[Proposition 1.9]{FarLud} that $\alpha \geq 100$.
\end{enumerate}
 We consider the sum 
\[ \frac{1}{92}\chi^*_{13,2}[\mm^2_{26, 19}] + \frac{1}{23}\pi_{13,2}^{*}[\mm^1_{13,7}] + \frac{3}{92}i^*_{13,2}[\cU_{14,4}] = \psi + 13\lambda - 2\delta_0' - 3\delta_0^{\mathrm{ram}}- \cdots \]
We still need to check that this divisor satisfies the slope requirements for the other boundary divisors. The coefficient of $\delta_{0:13,\left\{\eta\right\}}$ is greater than or equal to 
$ \frac{1}{92}\cdot 72 + 0 + \frac{3}{92}\cdot 100 > 3$. For the boundary divisor $\delta_{0:13,\left\{\OO\right\}}$ the coefficient is greater than or equal to
$ 2\cdot \frac{\textrm{coefficient}_{\delta_1}[\mm^2_{26, 19}]}{92} = 2\cdot \frac{200}{92} > 3$.

It is easy to check that all other slope requirements are respected. Consequently $K_{\rr_{13,2}}$ is in the effective cone and $\rr_{13,2}$ has non-negative Kodaira dimension.

$\bullet$ For the space $\rr_{16,2}$ we consider the following divisors (up to multiplication by a positive constant): 
\begin{enumerate}
\item The pullback $\chi_{16,2}^*[\mm^2_{32,23}]$ of the Brill-Noether divisor $\mm^2_{32,23}$ on $\mm_{32}$: 
\[ \chi_{16,2}^*[\mm^2_{32,23}] = 35\psi + 560\lambda -114\delta_0^{\mathrm{ram}}-88\delta_0'- 88\delta_{0:16,\left\{\eta\right\}} - \cdots \]
\item The pullback $\pi_{16,2}^*[\zz_{16,1}]$ of the Koszul divisor $\zz_{16,1}$ (see \cite[Theorem 1.1]{Farkosz}) on $\mm_{16}$:
\[ \pi_{16,2}^*[\zz_{16,1}] = 407\lambda - 122\delta_0^{\mathrm{ram}} - 61\delta_0' -\cdots \] 
\item The pullback $i_{16,2}^*\circ \pi_{17}^*[\mm^1_{17,9}]$ of the Brill-Noether divisor $\mm^1_{17,9}$ on $\mm_{17}$: 
\[ i_{16,2}^*\circ \pi_{17}^*[\mm^1_{17,9}] = 3\psi + 20\lambda - 6\delta_0^{\mathrm{ram}} - 3\delta_0' - 16\delta_{0:16,\left\{\eta\right\}} - \cdots \] 	
\end{enumerate}
For small enough $\epsilon$, we consider the sum 
\[ \text{\large(}\frac{62}{4933}+\frac{1}{4993}\epsilon\text{\large)}\chi_{16,2}^*[\mm^2_{32,23}] + \text{\large(}\frac{27}{4993}+\frac{80}{4993}\epsilon\text{\large)}  \pi_{16,2}^*[\zz_{16,1}] + \text{\large(}\frac{921}{4933}-\frac{1656}{4933}\epsilon\text{\large)} i_{16,2}^*\circ \pi_{17}^*[\mm^1_{17,9}]\]
which is equal to
\[ (1-\epsilon)\psi + 13\lambda - 2\delta_0' - \text{\large(}\frac{15888}{4933}-\frac{62}{4933}\epsilon\text{\large)}\delta^{\mathrm{ram}}_0 - \text{\large(}\frac{20192}{4933}-\frac{26408}{4933}\epsilon\text{\large)}\delta_{0:16,\left\{\eta\right\}}-\cdots \]
 
Checking that all other slope requirements are satisfied is immediate. The conclusion follows because we know that $\psi$ is big and nef (see \cite[Proposition 1.2]{FarVerUnivtheta}).

$\bullet$ For the space $\rr_{17,2}$ we consider the following divisors (up to multiplication by a positive constant): 
\begin{enumerate}
\item The pullback $\chi_{17,2}^*[\mm^4_{34, 31}]$ of the Brill-Noether divisor $\mm^4_{34, 31}$ on $\mm_{34}$: 
\[ \chi_{17,2}^*[\mm^4_{34, 31}] = 111\psi + 1776\lambda - 362\delta_0^{\mathrm{ram}} - 280\delta_0' - 280\delta_{0:17,\left\{\eta\right\}} -\cdots \] 
\item The pullback $\pi^*_{17,2}[\mm^1_{17,9}]$ of the Brill-Noether divisor $\mm^1_{17,9}$ on $\mm_{17}$: 
\[ \pi^*_{17,2}[\mm^1_{17,9}] = 20\lambda - 6\delta_0^{\mathrm{ram}} - 3\delta_0' - \cdots \]
\item The pullback $i_{17,2}^*\circ \pi_{18}^*[\gp^5_{18,20}]$ of the Gieseker-Petri divisor $\gp^5_{18,20}$ on $\mm_{18}$: 
\[   i_{17,2}^*\circ \pi_{18}^*[\gp^5_{18,20}] = 77\psi + 516 \lambda - 154\delta_0^{\mathrm{ram}} - 77\delta_0' - 408\delta_{0:17,\left\{\eta\right\}} -\cdots  \]	
\end{enumerate}
For $\epsilon$ small enough, we consider the sum
\[ \text{\large(}\frac{85}{21832}-\frac{1}{2729}\epsilon\text{\large)} \chi_{17,2}^*[\mm^4_{34, 31}] + \text{\large(}\frac{2489}{21832}+\frac{966}{2729}\epsilon\text{\large)}\pi^*_{17,2}[\mm^1_{17,9}] + \text{\large(}\frac{161}{21832}-\frac{34}{2729}\epsilon\text{\large)}i_{17,2}^*\circ \pi_{18}^*[\gp^5_{18,20}] \]
which is equal to 
\[ (1-\epsilon)\psi + 13\lambda - 2\delta_0' - \text{\large(}\frac{70498}{21832}+\frac{198}{2729}\epsilon\text{\large)} \delta_0^{\mathrm{ram}}- \cdots  \]
and it can be checked that it respects all the slope requirements. The divisor $i_{17,2}^*\circ \pi_{18}^*[\gp^5_{18,20}]$ is necessary here for the coefficient of $\delta_{0:17,\left\{\eta\right\}}$. 

As the situation is entirely similar for all the other cases, we will simply state which divisors are used, skipping the numerical details. 
\begin{itemize}
	\item  For the space $\rr_{18,2}$ we will use the divisors $\chi^*_{18,2}[\gp^5_{36,35}]$, $\pi_{18,2}^*[\gp^5_{18,20}]$ and $i_{18,2}^*\circ \pi_{19}^*[\mm^1_{19,10}]$. 
	\item  For the space $\rr_{19,2}$ we will use the divisors $\chi_{19,2}^*[\mm^2_{38,27}]$, $\pi_{19,2}^*[\mm^1_{19,10}]$ and $i_{19,2}^*\circ \pi_{20}^*[\mm^2_{20,15}]$.
	\item For the space $\rr_{20,2}$ we will use the divisors
	$\chi_{20,2}^*[\gp^7_{40,42}]$, $\pi_{20,2}^*[\mm^2_{20,15}]$ and  $i_{20,2}^*\circ \pi_{21}^*[\mm^1_{21,11}]$. 
	\item  For the space $\rr_{21,2}$ we will use the divisors $\chi_{21,2}^*[\gp^6_{42,42}]$, $\pi_{21,2}^*[\mm^1_{21,11}]$ and $i_{21,2}^*\circ \pi_{22}^*[\gp^{11}_{22,12}]$. 
\end{itemize}

Using the outlined divisors, we conclude that $K_{\rr_{g,2}}$ is big for $g\geq 16$. The proof follows as a consequence of Theorem \ref{pluricanonical extension} . 

\hfill $\square$ 

Next, we will study the birational geometry of $\cR_{g,2}$ when $g$ is small. 

\textbf{Proof of Theorem \ref{lowgenus}:} For the length $g+2$ partition $\mu = (1,1,2,\ldots,2,2g-4)$, we consider the irreducible stratum 
\[ \mathcal{Q}_g(\mu) = \left\{[C,x,y, z_1,\ldots,z_g]\in \cM_{g,g+2} \ | \ \OO_C(x+y+2\sum_{i=1}^{g-1}z_i + (2g-4)z_g)\cong \omega_C^{\otimes 2}\right\} \]
and the map $\mathcal{Q}_g(\mu) \rightarrow \mathcal{R}_{g,2}$ defined as: 
\[ [C,x,y,z_1,\ldots,z_g]\mapsto [C,x+y, \omega_C(-x-y-\sum_{i=1}^{g-1}z_i-(g-2)z_g)]. \]
This map is dominant and $\mathcal{Q}_g(\mu)$ is uniruled for $3\leq g\leq 6$, see \cite[Theorem 0.3]{BAR18}. This concludes the proof. \hfill $\square$ 

It is important to note that the Prym moduli spaces provide an interesting geometric property for the divisors of "small" slope on $\mm_g$.  

\begin{prop}
	Let $D$ be an effective divisor on $\cM_g$ of slope $s(D) < 10$ and $\overline{D}$ its closure in $\mm_g$. Depending on the parity of $g$ we have the following: 
	
	i. If $g = 2i+1$, then $\overline{D}$ contains the locus $\chi_{i+1}(\Delta_{1:i})$,
	
	ii. If $g =2i$, then $\overline{D}$ contains the locus $\chi_{i,2}(\Delta_{i-1:1,\left\{\eta\right\}})$.
\end{prop}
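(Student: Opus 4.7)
The plan is to prove both cases by the same slope-inequality argument; since (i) and (ii) are structurally identical after swapping $\chi_{i+1}$ with $\chi_{i,2}$ and the corresponding boundary divisor, I focus on case (i). I argue by contradiction: suppose $\overline{D}$ does not contain $\chi_{i+1}(\Delta_{1:i})$. Then $\chi_{i+1}^{-1}(\overline{D})$ is an effective divisor on $\rr_{i+1}$ whose support does not contain $\Delta_{1:i}$, so any irreducible curve $C \subset \Delta_{1:i}$ not contained in the proper subscheme $\chi_{i+1}^{-1}(\overline{D}) \cap \Delta_{1:i}$ satisfies $C \cdot \chi_{i+1}^*[\overline{D}] \geq 0$. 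My aim is to exhibit a covering family of such test curves whose pushforward to $\mm_{2i+1}$ has slope at least $10$, thereby forcing the contradiction.

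To construct the family, I use that $\Delta_{1:i}$ is birational via the gluing map to a finite quotient of $\rr_{1,1} \times \rr_{i,1}$. I would fix a general $[Y, q, \eta_Y] \in \rr_{i,1}$ and let $C_Y \subset \Delta_{1:i}$ be the test curve obtained by gluing at $p \sim q$ a pencil of plane cubics through $8$ general points, with a $9$-th section giving $p$, base-changed by degree $3$ to capture a nontrivial $2$-torsion section $\eta_E$, to the fixed data $(Y, q, \eta_Y)$. The numerical class of $C_Y$ is independent of the generic $Y$, and the union of the $C_Y$'s as $Y$ varies is dense in $\Delta_{1:i}$.

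Using the pullback formulas for $\chi_{i+1}^*$ at the level of Picard groups (established in \cite{MiraBern} and analogous to those of Section 3), the intersection numbers of $C_Y$ against $\lambda$ and the boundary classes on $\rr_{i+1}$ should yield
\[ \frac{\chi_{i+1,*}(C_Y) \cdot \delta}{\chi_{i+1,*}(C_Y) \cdot \lambda} \;\geq\; 10. \]
Writing $[\overline{D}] = a\lambda - \sum_j b_j \delta_j$ with $a/\min_j b_j = s(D) < 10$ and using $b_j \geq a/s(D) > a/10$ for each $j$, the projection formula gives $C_Y \cdot \chi_{i+1}^*[\overline{D}] = \chi_{i+1,*}(C_Y) \cdot [\overline{D}] < 0$, contradicting effectivity. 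Hence $\chi_{i+1}(\Delta_{1:i}) \subset \overline{D}$.

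The principal obstacle will be the explicit slope-$10$ computation, which requires carefully tracking the multiplicative interaction of the degree-$3$ base change needed to parametrize the nontrivial $2$-torsion, the étale double cover of the single node of $X_1 \cup X_i$ yielding two non-separating nodes of $E \cup \widetilde{Y} \subset \mm_{2i+1}$, and the standard invariants ($\lambda$-degree $1$ and $12$ nodal fibers) of the elliptic pencil. Case (ii) is handled analogously using the ramified double cover structure of $\chi_{i,2}$ and the boundary formulas $\chi_{i,2}^*\lambda$, $\chi_{i,2}^*\delta_j$ already computed in Section 3, with the genus $i-1$ component now carrying the $2$-branched Prym structure while the elliptic component varies in $\rr_{1,1}$.
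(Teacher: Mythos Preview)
Your strategy is essentially the paper's: argue by contradiction, pull $\overline{D}$ back along $\chi_{i+1}$ (resp.\ $\chi_{i,2}$), and intersect with a moving test curve in $\Delta_{1:i}$ (resp.\ $\Delta_{i-1:1,\{\eta\}}$) built from an elliptic pencil glued to a fixed pointed Prym curve. Your degree-$3$ base change to acquire a nontrivial $2$-torsion section is exactly the paper's pullback of the standard pencil $A\subset\mm_{i+1}$ to $\Delta_{1:i}$.

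Two points deserve tightening. First, you omit the preliminary case split: if $\chi_{i+1}(\rr_{i+1})\subset\overline{D}$ then $\chi_{i+1}^{-1}(\overline{D})$ is all of $\rr_{i+1}$ rather than an effective divisor, so your ``then $\chi_{i+1}^{-1}(\overline{D})$ is an effective divisor'' step fails as stated. The paper disposes of this case in one line (the conclusion is then automatic) before assuming the contrary. Second, you defer the crucial numerical step. The paper supplies the intersection numbers directly (citing \cite{Bud-adm}, \cite{Carlos}):
\[
A_{1:i}\cdot\lambda=3,\quad A_{1:i}\cdot\delta_0'=12,\quad A_{1:i}\cdot\delta_0^{\mathrm{ram}}=12,\quad A_{1:i}\cdot\delta_{1:i}=-3,
\]
all others zero. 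Combined with the pullback formulas for $\chi_{i+1}^*$ from \cite{FarLud}, one finds $\chi_{i+1,*}(A_{1:i})\cdot\lambda=3$, $\chi_{i+1,*}(A_{1:i})\cdot\delta_0=30$, and $\chi_{i+1,*}(A_{1:i})\cdot\delta_j=0$ for $j\geq1$. Thus the slope of the pushed-forward curve is exactly $10$, and writing $[\overline{D}]=a\lambda-b_0\delta_0-\cdots$ gives $\chi_{i+1,*}(A_{1:i})\cdot[\overline{D}]=3a-30b_0<0$ once $a/b_0<10$. Note in particular that only the $\delta_0$-coefficient enters, so your detour through ``$b_j\geq a/10$ for every $j$'' is unnecessary; the argument needs nothing about the higher $b_j$. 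The even-genus case is handled identically with the analogous test curve $A_{i-1:1,\{\eta\}}$ and the $\chi_{i,2}^*$ formulas from Section~3.
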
 

\begin{proof} We start with the case $g = 2i+1$. If $\chi_{i+1}(\rr_{i+1})$ is contained in $\overline{D}$, the conclusion is clear; hence we can assume the contrary. 
	
We consider a generic pencil of elliptic curves. We attach a base point to a generic point of a generic curve of genus $i$ and obtain in this way a test curve $A$ in $\mm_{i+1}$. We denote by $A_{1:i}$ the pullback of this test curve to $\Delta_{1:i}\subseteq\rr_{i+1}$. We know from \cite{Bud-adm}, \cite{Carlos} that 
\[ A_{1:i} \cdot \lambda = 3, \ A_{1:i} \cdot \delta_0' = 12, \ A_{1: i} \cdot \delta^{\mathrm{ram}}_0 = 12, \ A_{1:i} \cdot \delta_{1:i} = -3 \] 	
while the intersection with all other boundary divisors is $0$. 	

We have that $\chi_{i+1}^*[\overline{D}]$ is an effective divisor in $\rr_{i+1}$. If this divisor does not contain $\Delta_{1:i}$ in its support, it follows that $\chi_{i+1}^*[\overline{D}]\cdot A_{1:i} \geq 0$. If we write (up to multiplication by a constant) the class of $\overline{D}$ as $s\lambda - \delta_0 -\cdots$ the inequality $\chi_{i+1}^*[\overline{D}]\cdot A_{1:i} \geq 0$ becomes: 
\[ 2s - 4(2 + 1+\frac{s}{4}) + 2 \geq 0 \]
that is $s\geq 10$. Hence our assumption was wrong and we get the conclusion for the odd case. 

For the case $g = 2i$ we can define a test curve 
$A_{i-1:1,\left\{\eta\right\}}$ on $\rr_{i,2}$ by considering two points on the genus $i-1$ component of the test curve $A$ in $\mm_i$ and pulling it back to $\Delta_{i-1:1,\left\{\eta\right\}}$. We have the intersection numbers: 
\[ A_{i-1:1,\left\{\eta\right\}}\cdot \lambda = 3, \   A_{i-1:1,\left\{\eta\right\}}\cdot \delta_0' = 12, \  A_{i-1:1,\left\{\eta\right\}} \cdot \delta^{\mathrm{ram}}_0 = 12, \  A_{i-1:1,\left\{\eta\right\}} \cdot \delta_{i-1:1,\left\{\eta\right\}} = -3 \] 
while the intersection of $ A_{i-1:1,\left\{\eta\right\}}$ with $\psi$ and all other boundary classes is 0. By considering the pullback of $\chi_{i,2}\colon\rr_{i,2} \rightarrow \mm_{2i}$, the proof follows analogously to the case $g= 2i+1$.
\end{proof} 

\section{The singularities of $\rr_{g,2}$}
In order to conclude Theorem \ref{kodairarr} we still need to prove that any pluricanonical form on the smooth locus of $\rr_{g,2}$ can be holomorphically extended to any desingularisation. Namely, we need to show that:  
\begin{trm} \label{pluricanonical extension}
	We fix $g\geq 4$ and let $\widehat{\cR}_{g,2} \rightarrow \rr_{g,2}$ be any desingularisation. Then every pluricanonical form defined on the smooth locus $\rr^{\textrm{reg}}_{g,2}$ extends holomorphically to the space $\widehat{\cR}_{g,2}$.
\end{trm}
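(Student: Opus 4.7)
The plan is to follow the strategy pioneered by Ludwig for spin curves \cite{Ludspin} and developed further by Farkas--Ludwig for $\overline{\mathcal R}_g$ \cite{FarLud}. \'Etale-locally near a point $p = [X, x+y, \eta, \beta]$, the stack $\overline{\mathcal R}_{g,2}$ is the quotient of the versal deformation space by the automorphism group $G := \mathrm{Aut}(X, x+y, \eta, \beta)$. By the Reid--Tai criterion, a pluricanonical form on $\overline{\mathcal R}_{g,2}^{\mathrm{reg}}$ extends across $p$ provided that for every non-trivial $\sigma \in G$ the age of $\sigma$ acting on the tangent space of the versal deformation is at least one; quasi-reflections yield smooth quotients and hence cause no obstruction.

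The first step is to split $G$ into its inessential part (acting trivially on the stable model of $(X, x+y)$) and its essential part. The inessential elements are generated by the $-1$ action on $\eta$ and by the $\pm 1$ actions on each exceptional component; these are quasi-reflections, whose ramification is already recorded in the divisors $\Delta_0^{\mathrm{ram}}$ and $\Delta_{i:g-i}$ of Section 2. They contribute no obstruction to extending pluricanonical forms and may therefore be ignored.

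Next, I would enumerate the essential automorphisms: automorphisms of pointed stable curves which admit a lift compatible with $\eta$ and $\beta$. Mirroring the Harris--Mumford analysis and its adaptation in \cite{FarLud}, the candidate obstructing strata are elliptic tail involutions (with the branch divisor either disjoint from the tail or pinned at its fixed points), hyperelliptic involutions on genus-two bridges attached at Weierstrass points, and the switching involution on a rational bridge carrying both branch points. For each such $\sigma$, I would compute the eigenvalues on the tangent directions smoothing nodes and deforming components together with their Prym structure, and verify that the age is at least one as soon as $g \geq 4$; this hypothesis guarantees that the complementary component contributes enough eigenvalues to push the Reid--Tai sum above the threshold.

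The main obstacle will be the residual elliptic tail locus, where the age computation is most delicate because the Prym condition $\eta^{\otimes 2} \cong \mathcal O_X(-x-y)$ interacts non-trivially with the two-torsion classes on the tail and with the position of the branch points. The hard part will be showing that whenever the Reid--Tai sum drops strictly below one, the locus in question has codimension at least two in $\overline{\mathcal R}_{g,2}$. Combined with a Hartogs-type extension across such codimension-two loci and the standard extension across canonical singularities on their complement, this yields the theorem.
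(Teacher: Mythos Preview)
Your overall strategy matches the paper's: describe $\overline{\mathcal R}_{g,2}$ locally as a quotient of the versal deformation space, discard quasi-reflections, and run the Reid--Tai age analysis following \cite{Ludspin} and \cite{FarLud}. The paper carries this out by classifying the possible triples $(C^\nu_j, D_j, \varphi_j)$ with their minimal age contributions $w_j$ (an analogue of \cite[Proposition~6.12]{FarLud} with a few extra cases coming from the two marked points), concluding that the only non-canonical singularities occur at elliptic tails with $j$-invariant $0$ on which $\eta$ is trivial. Up to this point your plan is essentially the same, if less precise.

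The genuine gap is in your final paragraph. You propose to handle the locus where the Reid--Tai sum drops below $1$ by showing it has codimension at least two and then applying a ``Hartogs-type extension''. This does not work. Hartogs (equivalently, reflexivity of $\omega^{\otimes m}$ on a normal variety) extends sections across codimension-two subsets \emph{of the singular variety itself}; it says nothing about extension to a \emph{desingularisation}. A non-canonical singularity is by definition one where some pluricanonical form on the smooth locus can fail to lift to a resolution, and the exceptional locus over a codimension-two stratum of $\overline{\mathcal R}_{g,2}$ will typically be a divisor in $\widehat{\mathcal R}_{g,2}$, so codimension counts downstairs are irrelevant upstairs.

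What the paper actually does---and what Harris--Mumford did originally---is an explicit local computation at the $j=0$ elliptic-tail locus: one writes the order-$6$ automorphism in suitable coordinates, produces an explicit (partial) resolution of the resulting cyclic quotient singularity, and checks by hand that a pluricanonical form acquires no pole along the exceptional divisor. This is the argument on pages~41--44 of \cite{KodMg}, which the paper invokes verbatim. You need to replace your Hartogs step with this computation.
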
 

Our approach in proving this statement follows closely the ones in \cite{Ludspin} and \cite{FarLud}. As the proofs will be very similar to those appearing in these papers, our main goal will be to point out the differences in the statements. Next, we want to describe the smooth locus of $\rr_{g,2}$ and for this we give the following definitions. 

\begin{defi} An irreducible component $C_j$ of a quasistable curve $[X,x+y]$ is called a rational tail if the arithmetic genus $p_a(C_j)$ is 0 and $C_j \cap \overline{X\setminus C_j} = \left\{p\right\}$. The node $p$ is then called a rational tail node. A non-trivial automorphism $\sigma$ of $[X, x+y]$ is called a rational tail automorphism (with respect to $C_j$) if $\sigma_{X\setminus C_j}$ is the identity.  
\end{defi}
It is clear from the definition that if $C_j$ is a rational tail then the two points $x$ and $y$ are on $C_j$. With the obvious modifications, we can define what it means for a morphism $\sigma$ to be an elliptic tail automorphism (with respect to an elliptic tail $C_j$), with the remark that in the definition we add the extra condition $x, y \notin C_j$. 

\begin{defi}
	An exceptional component $E$ of a quasistable curve $[X, x+y]$ is called a disconnecting exceptional component if $\overline{X\setminus E}$ consists of two disjoint connected components, which we denote $X_1$ and $X_2$.
	
	Let $[X, x+y, \eta, \beta]$ a $2$-branched Prym curve and $E$ a disconnecting exceptional component of $[X, x+y]$. We denote by $\gamma_E \in \textrm{Aut}_0(X,x+y,\eta, \beta)$ the inessential automorphism that is the multiplication with 1 and respectively $-1$ in every fiber of $\eta$ over $X_1$ and respectively $X_2$. 
\end{defi} 

Similarly to Theorem 6.5 in \cite{FarLud} and Proposition 2.15 in \cite{Ludspin} we get the following theorem: 
\begin{trm}
	Let $(X, x+y, \eta, \beta)$ be a $2$-branched Prym curve of genus $g\geq 4$. Then the point $[X, x+y, \eta, \beta]$ in $\rr_{g,2}$ is smooth if and only if $\mathrm{Aut}(X, x+y, \eta, \beta)$ is generated by rational tail involutions, elliptic tail involutions and automorphisms of the form $\gamma_E$ for some disconnecting exceptional component $E$.  
\end{trm}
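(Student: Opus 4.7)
The argument follows closely the strategy of \cite{Ludspin} for spin curves and \cite{FarLud} for Prym curves, and the main task is to adapt the local analysis to accommodate the two marked branch points $x, y$ together with the Prym datum $(\eta,\beta)$.

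The plan is to use the standard local description of $\rr_{g,2}$: around a point $[X,x+y,\eta,\beta]$, the space is étale-locally isomorphic to the quotient
\[ \mathrm{Def}(X,x+y,\eta,\beta)/\mathrm{Aut}(X,x+y,\eta,\beta), \]
where $\mathrm{Def}(X,x+y,\eta,\beta)$ is the universal deformation space of the $2$-branched Prym curve, a smooth polydisc of dimension $3g-1$. By the Chevalley--Shephard--Todd theorem, such a quotient is smooth at the origin if and only if the image of $\mathrm{Aut}(X,x+y,\eta,\beta)$ in $\mathrm{GL}\bigl(T_0\mathrm{Def}\bigr)$ is generated by pseudo-reflections (elements whose fixed locus in the tangent space is a hyperplane). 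So the theorem reduces to identifying exactly which automorphisms act as pseudo-reflections on $T_0\mathrm{Def}$.

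First I would set up the decomposition of $T_0\mathrm{Def}(X,x+y,\eta,\beta)$ coming from the forgetful map to $\mm_{g,2/\mathbb{Z}_2}$ and the Prym structure. The tangent space splits into smoothing parameters for each node (one for each node of the stable model, plus contributions from the exceptional components coming with the quasistable structure) and deformation parameters of $[X,x+y]$ preserving its topological type, together with the gluing data for $\eta$ at each node. I would then compute, case by case, the action of each of the three named automorphism types:
\begin{enumerate}
\item For a rational tail involution $\sigma$ attached to a rational tail $C_j$ (so $x,y\in C_j$), the fixed locus in $T_0\mathrm{Def}$ is the hyperplane where the smoothing parameter of the tail node vanishes, since $\sigma$ acts as $-1$ on that coordinate and trivially on all others (cf. the analogous computation in \cite{Ludspin}).
\item For an elliptic tail involution, one uses the classical computation (going back to Harris--Mumford) that it acts as $-1$ on the elliptic tail node smoothing parameter and trivially on the remainder, after suitable adjustment coming from the $j=0$, $j=1728$ loci being already contained in the boundary.
\item For $\gamma_E$ with $E$ a disconnecting exceptional component, the automorphism acts trivially on the curve and on all smoothing parameters except on the gluing datum for $\eta$ at one of the two nodes of $E$, where it acts as $-1$. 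Again the fixed locus is a hyperplane.
\end{enumerate}
Each of these is thus a pseudo-reflection, which proves the ``if'' direction.

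For the converse, the task is to show that any automorphism not lying in the subgroup generated by these three types acts on $T_0\mathrm{Def}$ with fixed locus of codimension at least two. This is the main obstacle. I would proceed by first reducing to the case of inessential automorphisms using that $\mathrm{Aut}(X,x+y)\to \mathrm{Aut}_{\mathrm{stab}}(X,x+y)$ has kernel generated by the $\gamma_E$'s together with trivial extensions of automorphisms of exceptional components, and then treating essential automorphisms $\sigma$ via Catanese-type arguments on the induced action on the underlying stable model. For a non-trivial essential $\sigma$, the fixed locus of $\sigma$ on the tangent space to $\mm_{g,2/\mathbb{Z}_2}$ at the stable model of $[X,x+y]$ has codimension at least $2$ unless $\sigma$ is a rational or elliptic tail involution: this is the analog of \cite[Lemma 2.13]{Ludspin} and \cite[Proposition 6.4]{FarLud}, and here one uses crucially the fact that $g\geq 4$ together with the presence of the two marked points (which prevents, for instance, hyperelliptic involutions from acting as pseudo-reflections, since they permute $x$ and $y$ or move them non-trivially on a component). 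Combining these two reductions finishes the converse and completes the proof.
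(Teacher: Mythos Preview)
Your proposal is correct and follows essentially the same approach as the paper: the paper also invokes the local model $\mathbb{C}^{3g-1}/\mathrm{Aut}(X,x+y,\eta,\beta)$, appeals to the quasi-reflection criterion (citing Prill rather than Chevalley--Shephard--Todd, which is the more precise reference in the analytic category), and then defers the identification of quasi-reflections to the analogous analysis in \cite[Proposition~6.6]{FarLud}. Your write-up simply unpacks that last step in more detail; the only quibble is that your parenthetical about the $j=0$, $j=1728$ loci in the elliptic-tail case is unnecessary and slightly misleading---the elliptic tail \emph{involution} acts as a pseudo-reflection regardless of the $j$-invariant, and the special $j$-values only matter later when classifying non-canonical singularities.
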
 
\begin{proof}
	Locally at $[X, x+y, \eta, \beta]$ the coarse moduli space $\rr_{g,2}$ is given as a neighbourhood of 0 in the quotient $\mathbb{C}^{3g-1}_\tau/\mathrm{Aut}(X, x+y, \eta, \beta)$. 
	
	In this situation, we know from \cite{prill} that $[X, x+y, \eta, \beta]$ is smooth in $\rr_{g,2}$ if and only if the group $\mathrm{Aut}(X, x+y, \eta, \beta)$ is generated by quasi-reflections. Arguing as in \cite[Proposition 6.6]{FarLud} we conclude that the only automorphisms acting as quasi-reflections are those appearing in the statement.  
\end{proof}

The non-canonical singularities of the space $\rr_{g,2}$ can be easily described. 

\begin{trm} \label{canonicalsingularities}
	Let $g\geq 4$. Then $[X, x+y, \eta, \beta]$ is a non-canonical singularity if and only if $X$ has an elliptic tail $C_j$ with $j$-invariant 0 and $\eta$ is trivial on $C_j$.
\end{trm}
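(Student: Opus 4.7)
The plan is to apply the Reid--Shepherd-Barron--Tai criterion to the local presentation $\rr_{g,2} \cong \mathbb{C}^{3g-1}/\mathrm{Aut}(X,x+y,\eta,\beta)$ at the point $[X,x+y,\eta,\beta]$, following closely the analogous strategies for spin curves in \cite{Ludspin} and for unramified Prym curves in \cite{FarLud}. By the preceding theorem the subgroup of quasi-reflections in $\mathrm{Aut}(X,x+y,\eta,\beta)$ is generated by rational and elliptic tail involutions together with the automorphisms $\gamma_E$; consequently $[X,x+y,\eta,\beta]$ is a canonical singularity precisely when every other automorphism has age at least $1$ on the tangent space $\mathbb{C}^{3g-1}$, and it is non-canonical whenever some such automorphism has age $<1$.

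The first step is to enumerate, modulo quasi-reflections, the automorphism types that arise, by adapting the Harris--Mumford classification. Components of $X$ of geometric genus $\geq 2$ contribute only hyperelliptic involutions, whose age is easily checked to be $\geq 1$. The critical case is an elliptic tail $C_j$ attached at a node $p$, for which $\mathrm{Aut}(C_j,p)$ is $\mathbb{Z}/4$ when $j(C_j)=1728$ and $\mathbb{Z}/6$ when $j(C_j)=0$. For the order-$6$ generator $\tau$ in the $j=0$ case, the induced action on $\textrm{Pic}(C_j)[2]$ is an order-$3$ cyclic permutation of the three nontrivial $2$-torsion classes, so $\tau$ extends to an automorphism of the whole Prym curve if and only if $\eta|_{C_j}\cong \OO_{C_j}$. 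When $\eta|_{C_j}$ is trivial, the lift acts on $\mathbb{C}^{3g-1}$ with the same eigenvalues as on $\overline{\cM}_g$ --- in particular with eigenvalue $\zeta_6$ on the smoothing direction of $p$ and a primitive cube root of unity on the deformation of $C_j$ --- giving age $<1$ just as in Harris--Mumford, which proves the non-canonicity in the stated case. When $\eta|_{C_j}$ is nontrivial, only the elliptic tail involution $\tau^3$ lifts, and it is a quasi-reflection, so no non-canonical contribution arises.

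The remaining step is to verify that no other automorphism drops the age below $1$. For $j(C_j)=1728$ the order-$4$ generator has age $\geq 1$ exactly as in the unramified Prym analysis of \cite{FarLud}; compositions of a stable-level involution with the $-1$ on the Prym line bundle only add an eigenvalue $-1$ on one extra direction, which raises the age by $\tfrac{1}{2}$ and keeps the total $\geq 1$; and the automorphism types attached to exceptional and disconnecting-exceptional components are absorbed into the quasi-reflections $\gamma_E$ from the preceding theorem. The main obstacle in the write-up is not any single case but the combinatorial enumeration of the automorphism types compatible with the two marked points $x,y$. However, since an elliptic tail in a $2$-branched Prym curve by definition contains neither $x$ nor $y$, the elliptic tail portion of the classification is unchanged from the unmarked setting, and the remaining cases carry over from \cite{FarLud} with only minor bookkeeping.
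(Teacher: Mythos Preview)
Your approach coincides with the paper's: apply the Reid--Shepherd-Barron--Tai criterion to the local model $\mathbb{C}^{3g-1}/\mathrm{Aut}(X,x+y,\eta,\beta)$, use the preceding theorem to identify the quasi-reflections, and isolate the $j=0$ elliptic tail with trivial $\eta$ as the unique source of age $<1$, deferring the bulk of the case analysis to \cite{Ludspin} and \cite{FarLud}. Your account of why the order-$6$ automorphism lifts to the Prym structure exactly when $\eta|_{C_j}$ is trivial is correct and matches the reasoning the paper leaves implicit.

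The gap is in what you call ``minor bookkeeping.'' The paper does not simply invoke the earlier references: it first reduces to \emph{singularity-reduced} pairs $\bigl((X,x+y,\eta,\beta),\sigma\bigr)$, a step which forces $\sigma_C$ to fix every node and every component of the stable model. Without this reduction your assertion that components of geometric genus $\geq 2$ ``contribute only hyperelliptic involutions'' is unjustified, since a priori $\sigma$ could permute components or nodes. After the reduction the paper gives an explicit eight-case classification of the triples $(C_j^\nu,D_j,\varphi_j)$ together with lower bounds $w_j$ on their age contributions, and four of these types --- the rational tail, the rational ladder, and the $1$- and $2$-pointed elliptic tails --- are genuinely new to the $2$-branched setting, arising precisely from how $\sigma$ interacts with the markings $x$ and $y$. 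Each must be checked individually (for instance the $2$-pointed elliptic tail with $j=0$ and $\varphi_j$ of order $6$ swapping $x$ and $y$ has $w_j=\tfrac{5}{6}$). These verifications, together with the observation that only the elliptic tail case ii.c has $0<w_j\leq \tfrac{1}{3}$, are the actual content that distinguishes this proof from the unramified case; your sketch asserts them rather than carrying them out.
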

We observe that this description is very similar to other cases in the literature, see \cite{KodMg}, \cite{Logan},  \cite{Ludspin}, \cite{FarLud} and \cite{FarVerUnivJac}. The approach in proving Theorem \ref{canonicalsingularities} will be similar to the one in \cite{FarLud} and \cite{Ludspin} and we will continue by pointing out the differences in our case. First, we will define what it means for a pair $\text{\large (}(X,x+y,\eta, \beta), \sigma\text{\large)}$, where $\sigma \in \textrm{Aut}(X, x+y, \eta, \beta)$, to be singularity reduced. 

For such a pair $\text{\large (}(X,x+y,\eta, \beta), \sigma\text{\large)}$ we define $(C,x+y)$ to be the stable model of $(X,x+y)$ and $\sigma_C$ to be the automorphism of $C$ induced by $\sigma$. We consider the definition:

\begin{defi}
	Let $p_{i_0}, p_{i_1} = \sigma_C(p_{i_0}), \ldots, p_{i_{m-1}} = \sigma^{m-1}_C(p_{i_0})$ be distinct nodes of $C$, cyclically permuted by $\sigma_C$ and $p_{i_0}$ is not a disconnecting exceptional node, a rational tail node or an elliptic tail node. Then $\sigma$ acts on the subspace $\bigoplus_{j=0}^{m-1}\mathbb{C}_{\tau_{i_j}} \subseteq \mathbb{C}_\tau^{3g-1}$ as $\sigma\cdot \tau_{i_{j-1}} = c_j\tau_{i_j} \ \forall j =\overline{0,m-1}$ for some constants $c_j$. We say the pair $\text{\large (}(X,x+y, \eta, \beta), \sigma\text{\large)}$ is singularity reduced if for every cycle as above we have $\prod_{j=1}^{m}c_j \neq 1$.
\end{defi}

Arguing as in \cite[Proposition 6.8]{FarLud} and \cite[Proposition 3.6]{Ludspin} it is sufficient to prove that if the pair $\text{\large (}(X,x+y, \eta, \beta), \sigma\text{\large)}$ is singularity reduced and satisfies $\textrm{age}(\sigma, \xi_n) <1$ then $X$ has an elliptic tail as in Theorem \ref{canonicalsingularities}. As in \cite{FarLud} and \cite{Ludspin} we denote by $(*)$ the assumption that $\text{\large (}(X,x+y, \eta, \beta), \sigma\text{\large)}$ is singularity reduced and satisfies $\textrm{age}(\sigma, \xi_n) <1$. We then have that: 
\begin{prop}
	If $(*)$ holds, then $\sigma_C$ fixes all nodes and all components of the stable model $(C, x+y)$ of $(X, x+y)$. 
\end{prop}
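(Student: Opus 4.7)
The plan is to adapt the proof of the analogous statement \cite[Proposition 6.9]{FarLud} (see also \cite[Proposition 3.7]{Ludspin}) to the $2$-branched setting, proceeding by contradiction. Suppose that $\sigma_C$ does not fix every node or every component of $C$. Then either some nodes $p_{i_0}, \ldots, p_{i_{m-1}}$ or some irreducible components $C_{j_0}, \ldots, C_{j_{m-1}}$ are non-trivially permuted in a cycle of length $m \geq 2$ by $\sigma_C$. The strategy is to show that any such cycle forces a contribution of at least $1$ to $\textrm{age}(\sigma, \xi_n)$, contradicting the hypothesis $(*)$.

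First I would handle cycles of nodes. If $p_{i_0}$ is not a rational tail node, an elliptic tail node, nor a disconnecting exceptional node, then the singularity-reduced hypothesis gives $\prod_{j=1}^m c_j \neq 1$; the eigenvalues of $\sigma$ acting on $\bigoplus_{j=0}^{m-1} \mathbb{C}_{\tau_{i_j}}$ are then the $m$ distinct $m$-th roots of a non-trivial root of unity, and a direct age calculation as in \cite[Lemma 5.2]{FarLud} bounds their total age from below by $1$. For the three excluded node types, rational tail nodes are automatically fixed because the remark following the definition forces the rational tail to contain both marked points $x$ and $y$, hence to be unique; for disconnecting exceptional nodes, the inessential automorphism $\gamma_E$ reduces the problem to a standard spin-type argument; for elliptic tail nodes, one pairs the smoothing parameter $\tau_{i_j}$ with the one-dimensional modulus of the corresponding elliptic tail and checks that the combined contribution to the age is still at least $1$, taking into account whether $\eta$ restricts trivially to each tail. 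The same mechanism handles a non-trivial cycle of components: it produces a $\sigma$-invariant subspace of the deformation space on which $\sigma$ acts cyclically, and an analogous eigenvalue analysis yields an age contribution bounded below by $1$.

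The main obstacle will be the careful bookkeeping for the three excluded node types, in particular for cycles of elliptic tails where one must simultaneously track the smoothing parameter, the $j$-invariant of each tail, and the action of $\sigma$ on the fibers of $\eta$ along the tail. This is essentially the combinatorial content of \cite[Section 6]{FarLud}, and the simplification afforded by the presence of the two marked points $x, y$ is that they pin down the rational tail uniquely and rule out several cycles that appear in the unpointed setting; the additional pointed data should not introduce further technical complications beyond what already appears there.
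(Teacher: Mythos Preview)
Your proposal is correct and follows exactly the approach the paper takes: the paper gives no explicit proof of this proposition, relying entirely on the analogous statements \cite[Proposition 6.9]{FarLud} and \cite[Proposition 3.7]{Ludspin}, and your outline is a faithful summary of how those arguments adapt to the $2$-branched setting. Your observation that the rational tail is unique (since it must carry both markings $x$ and $y$) is precisely the simplification noted in the paper, and is the only new ingredient beyond the cited references.
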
 

Next we will look at how $\sigma$ acts on the components of $C$ and describe all possible situations where the age contribution of the respective component is less than 1. This description is similar to that in \cite[Proposition 6.12]{FarLud} with some extra cases coming from the existence of the two marked points. 

\begin{prop} 
	Assume $(*)$ holds and let $C_j$ a component of $C$ with normalization denoted $C_j^\nu$. We denote by $D_j$ the divisor of marked points on $C_j^\nu$ and $\varphi_j \coloneqq \sigma^\nu_{|C^\nu_j}$. Then $(C^\nu_j, D_j, \varphi_j)$ is of one of the following types and the contribution to $\textrm{age}(\sigma, \xi_n)$ coming from $H^1(C^\nu_j, T_{C_j^\nu}(-D_j)) \subseteq \mathbb{C}_v^{3g-1}$ is at least the quantity $w_j$: \\
i. Identity component $\varphi_j = \textrm{Id}_{C^\nu_j}$, the pair $(C^\nu_j, D_j)$ is arbitrary and $w_j = 0$. \\
ii. Elliptic tail: $C^\nu_j$ is elliptic, $D_j = p_1$ and $p_1$ is fixed by $\varphi_j$. Depending on the order of $\varphi_j$ we distinguish the subcases: \par 
a. $\mathrm{ord}(\varphi_j) = 2$ and $w_j = 0$ \par 
b. $\mathrm{ord}(\varphi_j) = 4$, $C^\nu_j$ has $j$-invariant $1728$ and $w_j = \frac{1}{2}$ \par 
c. $\mathrm{ord}(\varphi_j) = 3$ or $6$, $C^\nu_j$ has $j$-invariant 0 and $w_j = \frac{1}{3}$ \\
iii. Elliptic ladder: $C_j^\nu$ is elliptic, $D_j = p_1 + p_2$ with both markings coming from nodes of $C$ and $\varphi_j$ fixes $p_1$ and $p_2$. We distinguish three subcases depending on the order of $\varphi_j$: \par 
a. $\mathrm{ord}(\varphi_j) = 2$ and $w_j =\frac{1}{2}$ \par 
b. $\mathrm{ord}(\varphi_j) = 4$, $C_j^\nu$ has $j$-invariant $1728$ and $w_j =\frac{3}{4}$ \par 
c. $\mathrm{ord}(\varphi_j) = 3$, $C_j^\nu$ has $j$-invariant $0$ and $w_j =\frac{2}{3}$ \\ 
iv. Hyperelliptic tail: $C^\nu_j$ has genus $2$, $\varphi_j$ is the hyperelliptic involution, $D_j$ is of the form $D_j = p_1$ with $p_1$ fixed by $\varphi_j$ and $w_j = \frac{1}{2}$ \\ 
v. Rational tail: $C^\nu_j$ is rational, $D_j = p_1 + x+y$, $\mathrm{ord}(\varphi_j) = 2$, the point $p_1$ is fixed by $\varphi_j$ while $x$ and $y$ are permuted, and $w_ j =0$. \\
vi. Rational ladder: $C^\nu_j$ is rational, $D_j = p_1 + p_2+ x+y$, $\mathrm{ord}(\varphi_j) = 2$, the points $p_1, p_2$ are fixed by $\varphi_j$ while $x$ and $y$ are permuted, and $w_ j =\frac{1}{2}$. \\ 
vii. $1$-pointed elliptic tail: $C_j^\nu$ is elliptic, $D_j = p_1 + p_2$ where $p_1$ comes from a node of $C$ and $p_2$ comes from one of the markings $x, y$. Both $p_1$ and $p_2$ are fixed by $\varphi_j$. We distinguish three subcases depending on the order of $\varphi_j$: \par 
a. $\mathrm{ord}(\varphi_j) = 2$ and $w_j = \frac{1}{2}$ \par 
b. $\mathrm{ord}(\varphi_j) = 4$, $C_j^\nu$ has $j$-invariant $1728$ and $w_j = \frac{3}{4}$ \par 
c. $\mathrm{ord}(\varphi_j) = 3$ or $6$, $C_j^\nu$ has $j$-invariant $0$ and $w_j = \frac{2}{3}$ \\ 
viii. $2$-pointed elliptic tail: $C_j^\nu$ is elliptic, $D_j = p_1 +x+y$ with $x$ and $y$ permuted by $\varphi_j$. Again, we distinguish two subcases depending on the order of $\varphi_j$: \par 
a. $\mathrm{ord}(\varphi_j) = 2$ and $w_j = \frac{1}{2}$ \par 
b. $\mathrm{ord}(\varphi_j) = 6$, $C_j^\nu$ has $j$-invariant $0$ and $ w_j = \frac{1}{3} + \frac{1}{3} +\frac{1}{6} = \frac{5}{6}$.
\end{prop}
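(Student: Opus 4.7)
The plan is to follow very closely the approach of \cite[Proposition 6.12]{FarLud} and \cite[Proposition 3.6]{Ludspin}, with the main task being to isolate and verify the new cases that arise from the presence of the two markings $x,y$. The basic strategy is to use Serre duality to replace the piece $H^1(C_j^\nu, T_{C_j^\nu}(-D_j))$ of the local deformation space by the space of quadratic differentials $H^0(C_j^\nu, \Omega_{C_j^\nu}^{\otimes 2}(D_j))$ with poles allowed along $D_j$, and then to decompose the latter into eigenspaces of $\varphi_j^*$. The age contribution $w_j$ is by definition the sum over all eigenvalues $\xi_n^{a_k}$ of the fractional exponents $a_k/n$ obtained on the deformation side; dualising by Serre duality accounts for the shift that appears in the tabulated values.

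First I would restrict the geometric possibilities. Since $\varphi_j$ is the restriction of a non-trivial automorphism of the full quasistable curve and must preserve both $D_j$ (as a set) and the divisor structure coming from the markings and the nodes, Riemann--Hurwitz applied to $C_j^\nu \to C_j^\nu / \langle \varphi_j\rangle$ bounds the order $\mathrm{ord}(\varphi_j)$ in each genus. A standard argument then shows that components of genus $\geq 3$ with $\varphi_j \neq \mathrm{Id}$ always contribute $w_j \geq 1$ and can be excluded, which leaves $g(C_j^\nu) \in \{0,1,2\}$. Cases with no marking from $\{x,y\}$ on $C_j^\nu$ reproduce verbatim the list of \cite[Proposition 6.12]{FarLud}: identity component (i), elliptic tail with one node (ii), elliptic ladder with two nodes (iii), and hyperelliptic tail (iv). In each of these the quadratic-differential computation on $C_j^\nu$ and the eigenvalue action of $\varphi_j$ on $H^0(C_j^\nu, \Omega^{\otimes 2}(D_j))$ are exactly as in the spin/Prym setting, because the extra markings $x,y$ are supported on different components.

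Next come the genuinely new cases v--viii, where $x$ or $y$ lies on $C_j^\nu$. Since $x,y$ are unordered and $\sigma$ is required to preserve $x+y$, on a component containing both $x$ and $y$ either $\varphi_j$ fixes each individually or swaps them; on a component containing exactly one marking, $\varphi_j$ must fix it. With this dichotomy in hand, the enumeration reduces to a Riemann--Hurwitz count on $C_j^\nu$ together with the constraint that the singularity-reduced hypothesis $(*)$ forbids $\varphi_j = \mathrm{Id}$ on a component with a non-trivial node permutation. For rational components this yields cases v (one node, involution swapping $x,y$) and vi (two nodes, involution swapping $x,y$), with $h^0(\Omega^{\otimes 2}(D_j)) = 0$ and $1$ respectively; for elliptic components it yields vii (one node plus one marking, each fixed, with the automorphism group orders dictated by $j$-invariants 0 or 1728) and viii (one node with both $x,y$ on $C_j^\nu$ swapped by $\varphi_j$). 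In each subcase I would compute $w_j$ by writing down an explicit basis of quadratic differentials with the prescribed poles, then reading off the $\varphi_j^*$-eigenvalues using the standard action of $\mu_n$ on $\mathbb{C}[z]$ near a fixed point; the exponents $\frac{1}{2}, \frac{1}{3}, \frac{2}{3}, \frac{3}{4}, \frac{5}{6}$ appearing in the statement arise from these local contributions.

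The main obstacle, and the one requiring the most care, is case viii.b: the $2$-pointed elliptic tail with $j$-invariant $0$ and an order-$6$ automorphism swapping $x$ and $y$. Here $h^0(\Omega^{\otimes 2}(p_1 + x + y)) = 3$ and the eigenvalue computation must split naturally as $\frac{1}{3} + \frac{1}{3} + \frac{1}{6}$, which depends on correctly identifying the fixed point of the order-$6$ action on the elliptic curve relative to the nodal marking $p_1$ and the orbit $\{x,y\}$, and then checking that the imposed poles are compatible with the automorphism. Once that calculation is nailed down and shown to give $w_j = \frac{5}{6}$, and the parallel (but easier) verification is done for cases v--vii, the full table follows and the reduction of Theorem \ref{canonicalsingularities} to singularity-reduced pairs combines with these $w_j$-bounds exactly as in \cite{FarLud,Ludspin}.
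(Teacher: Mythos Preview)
Your proposal is correct and follows the same route as the paper, which itself does not spell out a proof but defers to \cite[Proposition 6.12]{FarLud} and \cite[Proposition 3.6]{Ludspin} and only highlights the additional cases v--viii arising from the two markings. Your plan to pass via Serre duality to $H^0(C_j^\nu,\Omega^{\otimes 2}(D_j))$, decompose into $\varphi_j^*$-eigenspaces, and read off the age contributions is exactly the method of those references, and your identification and treatment of the new pointed cases (including the delicate viii.b computation) matches what the paper implicitly asserts.
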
 

If $(*)$ holds, the cases where $w_j > \frac{1}{3}$ cannot appear, while if for every irreducible component of $C$ we have $w_j$ = 0, we get that $\sigma$ is a composition of quasi-reflections. This implies Theorem \ref{canonicalsingularities}. In order to extend the  pluricanonical forms over the locus of non-canonical singularities, the method outlined in \cite[pages $41-44$]{KodMg} works in our situation, hence Theorem \ref{pluricanonical extension} follows. 
 
\section{The Prym-canonical divisorial strata} 

We consider the moduli space $\mathcal{C}^n\mathcal{R}_g$ parametrizing isomorphism classes of pairs $(X, x_1,\ldots,x_n, \eta)$ where $(X,x_1,\ldots,x_n)$ is an $n$-pointed smooth curve of genus $g$ and $\eta$ is a non-trivial $2$-torsion of $\OO_X$ in $\mathrm{Pic}(X)$. 

For a partition $\underline{d} = (d_1,\ldots,d_n)$ of $g-1$, we consider the Prym-canonical divisorial stratum $PD_{\underline{d}}$, defined as the locus 
\[ PD_{\underline{d}} \coloneqq \left\{ [X,x_1,\ldots, x_n,\eta] \in \mathcal{C}^n\mathcal{R}_g \ | \ h^0\text{\large(}X, \omega_X\otimes\eta(-\sum_{i=1}^nd_ix_i)\text{\large)} \geq 1 \right\}\] 
Our goal is to provide a suitable compactification $\cpp_g$ of $\mathcal{C}^n\mathcal{R}_g$ and compute the class $[\overline{PD}_{\underline{d}}]$ in $\textrm{Pic}(\cpp_g)$. 

Using the same approach as in \cite{corn}, we compactify $\pp_g$ to a moduli space $\cpp_g$ parametrizing isomorphism classes of pairs $(X, x_1,\ldots,x_n, \eta, \beta)$ where $(X, x_1,\ldots,x_n)$ is a quasistable $n$-pointed curve of genus $g$ and $\beta\colon \eta^{\otimes 2} \rightarrow \OO_X$ is a homomorphism of invertible sheaves that satisfies the properties: \par
1. The line bundle $\eta$ has total degree $0$ on $X$ and degree $1$ on every exceptional component, \par
2. The morphism $\beta$ is generically non-zero away from the exceptional components.  

The notion of isomorphism is simply the pointed generalization of the one considered in \cite{Casa}. 

Next, we consider the irreducible boundary divisors of $\cpp_g$ and describe a generic element for each one of them. They are as follows: 
\begin{itemize}
\item The divisors $\Delta_0', \Delta_0''$ and $\Delta_0^{\mathrm{ram}}$ whose generic point corresponds to an element $[X, x_1,\ldots,x_n, \eta, \beta]$ where $[X,\eta, \beta] \in \rr_g$ is generic in $\Delta_0', \Delta_0''$ and respectively $\Delta_0^{\mathrm{ram}}$ and $x_1,\ldots,x_n\in X$ are generic points of the non-exceptional component of $X$. \par
\item The divisors $\Delta_{g,S}$ for $S\subseteq\left\{1,\ldots,n\right\}$ and $|S|\leq n-2$. A generic element $[X,x_1,\ldots,x_n,\eta,\beta]$ satisfies that $[X,x_1,\ldots,x_n]\in \Delta_{0,S^c}\subseteq \mm_{g,n}$, the line bundle $\eta$ is trivial on the rational component and is a non-trivial $2$-torsion on the genus $g$ component. \par
\item The divisors $\Delta_{i,S:g-i}$ for $1\leq i\leq g-1$ and $S\subseteq \left\{1,\ldots,n\right\}$, whose generic element $[X, x_1,\ldots,x_n, \eta, \beta]$ satisfies that $[X, \eta, \beta]$ is in $\Delta_{i:g-i}\subseteq \rr_g$ and $[X,x_1,\ldots,x_n] \in \Delta_{i,S}\subseteq \mm_{g,n}$. We remark that the notations $\Delta_{i, S:g-i}$ and $\Delta_{g-i,S^c:i}$ refer to the same divisor of $\cpp_g$. \par
\item The divisors $\Delta_{i,S}$ for $1\leq i\leq g-1$ and $S\subseteq \left\{1,\ldots,n\right\}$, whose generic element $[X, x_1,\ldots,x_n, \eta, \beta]$ satisfies that $[X, \eta, \beta]$ is in $\Delta_i\subseteq \rr_g$ and $\left\{x_i\right\}_{i\in S}$ are generic points of the component of $X$ on which $\eta$ is non-trivial. 
\end{itemize}
Next, we consider maps between moduli spaces of pointed (Prym) curves and we describe the action of the pullback at the level of Picard groups. This will allow us to compute the classes of the Prym-canonical divisorial strata. 
 
We consider the map $\pi\colon \cpp_g \rightarrow \mm_{g,n}$ forgetting the Prym structure and stabilizing the underlying $n$-pointed curve. We denote $\psi_j \coloneqq \pi^{*}\psi_j$, $\lambda \coloneqq \pi^{*}\lambda$ in $\mathrm{Pic}(\mathrm{\cpp_g})$ and we want to describe the pullback of the classes $\lambda, \psi_j, \delta_0', \delta_0'', \delta_0^{\mathrm{ram}}$, $\delta_{i,S:g-i}$ and $\delta_{i,S}$ with respect to different maps. 
\begin{prop}\label{pullmap1} Consider the map $\pi_1\colon \mm_{g-i,n+1-s} \rightarrow \cpp_g$ defined as 
	\[[C,x_1,\ldots,x_{n-s},x] \mapsto [C\cup_{x\sim y}Y,x_1,\ldots,x_n, \OO_C, \eta_Y]\] where $[Y,y,x_{n-s+1},\ldots,x_n,\eta_Y]$ is a generic element in $\mathcal{C}^{s+1}\mathcal{R}_i$. The pullback at the level of Picard groups $\pi_1^*\colon \mathrm{Pic}(\cpp_g) \rightarrow \mathrm{Pic}(\mm_{g-i,n+1-s})$ satisfies: 
\[ \pi_1^{*}\lambda = \lambda, \ \pi_1^*\delta_0'' = \pi_1^*\delta_0^{\mathrm{ram}} = 0, \ \pi_1^*\delta_0' = \delta_0 \] 
\[\pi_1^*\psi_j = \psi_j \ \mathrm{for} \ 1\leq j\leq n-s, \ \pi_1^*\psi_j =0 \ \mathrm{for} \ j\geq n-s+1\]
\[ \pi_1^*\delta_{j,S:g-j} =0 \ \textup{for every} \ 1\leq j\leq g-1 \ \textup{and every} \ S\subseteq \left\{1,\ldots, n\right\} \]
For $T = \left\{n-s+1,\ldots, n\right\}$ we have: 
 \[ \pi_1^*\delta_{j,S} = 
\begin{cases} 
\delta_{j-i, (S\setminus T)\cup\left\{n-s+1\right\}} & \mathrm{when} \ i\leq j\leq g, \ T\subseteq S \ \mathrm{and} \ (j,S)\neq (i,T) \\
-\psi_{n-s+1} & \mathrm{when} \ j=i \ \mathrm{and} \ T=S \\
0 & \textup{otherwise}.
\end{cases}
\]

\end{prop}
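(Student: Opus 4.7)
The map $\pi_1$ is a standard clutching map composed with the constant section that selects the fixed generic element $[Y,y,x_{n-s+1},\ldots,x_n,\eta_Y]\in\mathcal{C}^{s+1}\mathcal{R}_i$. Its image lies in the boundary divisor $\Delta_{i,T}$ with $T:=\{n-s+1,\ldots,n\}$, since on every curve in the image the sheaf $\eta$ is non-trivial exactly on the fixed genus-$i$ component $Y$, which also carries the markings indexed by $T$. I compute each pullback by inspecting this generic geometry.

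On a curve with a disconnecting node the Hodge bundle splits additively, and the genus-$i$ summand is pulled back from the fixed point $[Y,\eta_Y]$, giving $\pi_1^{*}\lambda=\lambda$. Similarly, $\psi_j$ for $j\le n-s$ is the cotangent line of a varying marking on $C$ and pulls back to $\psi_j$, while for $j\ge n-s+1$ it lives on the fixed curve $Y$ and so vanishes. The generic image curve has a unique, disconnecting node, hence misses all of $\Delta_0',\Delta_0''$ and $\Delta_0^{\mathrm{ram}}$ set-theoretically; the pullback is concentrated on the locus where $C$ itself acquires a non-separating node. Since $\eta|_C\cong\OO_C$ stays trivial after pulling back to the normalisation of $C$, such degenerations land entirely in $\Delta_0'$, so $\pi_1^{*}\delta_0'=\delta_0$ and $\pi_1^{*}\delta_0''=\pi_1^{*}\delta_0^{\mathrm{ram}}=0$. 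Because no exceptional component is ever introduced by $\pi_1$, the divisors $\Delta_{j,S:g-j}$ are avoided set-theoretically and their pullbacks vanish.

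For the divisors $\Delta_{j,S}$, the curve $C\cup Y$ belongs to $\Delta_{j,S}$ iff it splits so that $Y$ lies on the genus-$j$ side, since that is the side with non-trivial Prym structure; this forces $j\ge i$ and $T\subseteq S$. When $(j,S)\neq(i,T)$ the additional splitting must occur on $C$, and one reads off that the pullback is the divisor $\delta_{j-i,(S\setminus T)\cup\{n-s+1\}}$ in $\mm_{g-i,n+1-s}$, the component detached from $C$ having genus $j-i$ and carrying the attaching marking labelled $n-s+1$ together with the markings in $S\setminus T$. Outside the stated constraints the images are disjoint from $\Delta_{j,S}$ and the pullback is $0$. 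The remaining case $(j,S)=(i,T)$ is a self-intersection, since $\pi_1$ factors through $\Delta_{i,T}$; the normal bundle of this boundary divisor in $\cpp_g$ is canonically $T_xC\otimes T_yY$, whose first Chern class is $-\psi_x-\psi_y$. Because $\psi_y$ is pulled back from the fixed factor $Y$ it vanishes, leaving $-\psi_{n-s+1}$. The step requiring the most care is this last self-intersection, where one must confirm that no stack-theoretic multiplicity enters the normal bundle formula; this follows because a generic element $[Y,y,\ldots,\eta_Y]$ has no automorphism interchanging the two sides of the attaching node.
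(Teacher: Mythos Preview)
Your argument is correct and complete; the one place where your phrasing is a little loose is the justification that non-separating degenerations of $C$ land in $\Delta_0'$ rather than $\Delta_0''$. The relevant fact is not that $\eta|_C$ is trivial on the normalisation of $C$, but that the pullback of $\eta$ to the partial normalisation of the \emph{whole} curve $C\cup Y$ is still non-trivial (because of $\eta_Y$), which is exactly the $\Delta_0'$ condition. Your conclusion is right; only the stated reason needs this small adjustment.

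The paper proves the proposition differently. Rather than analysing each class geometrically, it observes that $\lambda$, the $\psi_j$, and the boundary classes of $\cpp_g$ are all pulled back from $\mm_{g,n}$ via the forgetful map $\pi\colon\cpp_g\to\mm_{g,n}$ (with $\delta_0$ splitting as $\delta_0'+\delta_0''+2\delta_0^{\mathrm{ram}}$ and $\delta_{j,S}$ splitting into the various Prym pieces). Since $\pi\circ\pi_1$ is the standard clutching map $\mm_{g-i,n+1-s}\to\mm_{g,n}$, whose effect on tautological classes is classical (the paper cites \cite{cornintersection}), all the non-zero pullbacks, including the self-intersection term $-\psi_{n-s+1}$, are read off from those known formulas; the zero pullbacks follow from disjointness of the image with the relevant divisors. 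Your approach instead computes everything from first principles, including the self-intersection via the normal bundle $-\psi_x-\psi_y$, and so is more self-contained and more transparent about the underlying geometry. The paper's route is shorter because it outsources the combinatorics to an existing reference, but it requires knowing that every generator of $\mathrm{Pic}(\cpp_g)$ involved is indeed a $\pi^*$-class, which is implicit in how those classes were defined.
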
  
\begin{proof}
	The equalities with right term 0 follow because $\mathrm{Im}(\pi_1)$ does not intersect the respective divisors. For the other equalities we look at the composition map $\pi\circ \pi_1\colon \mm_{g-i,n+1-s} \rightarrow \mm_{g,n}$ and we use that $\pi_1^*\circ \pi^* = (\pi\circ \pi_1)^*$. The description of $(\pi\circ \pi_1)^*$ appearing in \cite{cornintersection} implies the conclusion.
\end{proof}

Similarly we get: 

\begin{prop} \label{pullmap2} Let $\pi_2\colon \cplow_{g-i}\rightarrow \cpp_g$ be given as \[[C,x_1,\ldots,x_{n-s},x, \eta_C] \mapsto [C\cup_{x\sim y}Y,x_1,\ldots,x_n, \eta_C, \eta_Y]\] 
where $[Y,y,x_{n-s+1},\ldots,x_n,\eta_Y]$ is a generic point of $\mathcal{C}^{s+1}\mathcal{R}_i$. The pullback $\pi_2^*\colon \mathrm{Pic}(\cpp_g) \rightarrow \mathrm{Pic}(\cplow_{g-i})$ satisfies: 
\[ \pi_2^{*}\lambda = \lambda, \ \pi_2^*\delta_0'' = 0, \ \pi_2^*\delta_0' = \delta_0'+\delta_0'', \ \pi_2^*\delta_0^{\mathrm{ram}} = \delta_0^{\mathrm{ram}} \] 
\[\pi_2^*\psi_j = \psi_j \ \mathrm{for} \ 1\leq j\leq n-s, \ \pi_2^*\psi_j =0 \ \mathrm{for} \ j\geq n-s+1\]
For $T = \left\{n-s+1,\ldots, n\right\}$ we have: 
 \[ \pi_2^*\delta_{j,S} = 
\begin{cases} 
\delta_{j-i, (S\setminus T)\cup\left\{n-s+1\right\}} & \mathrm{when} \ i+1\leq j\leq g \ \mathrm{and} \ T\subseteq S \\
0 & \textup{otherwise}.
\end{cases}
\]
We recall that the divisor $\Delta_{j,S:g-j}$ admits the alternative notation $\Delta_{g-j,S^c:j}$. We choose the one where the set contains $\left\{n-s+1\right\}$ and we have: 
\[ \pi_2^*\delta_{j,S:g-j} = 
\begin{cases} 
\delta_{j-i, (S\setminus T)\cup\left\{n-s+1\right\}:g-j} + \delta_{g-j,((S\setminus T)\cup\left\{n-s+1\right\})^c} & \mathrm{when} \ i\leq j\leq g-1, \ T\subseteq S \ \mathrm{and} \ (j,S)\neq (i,T) \\ 
-\psi_{n-s+1} & \mathrm{if} \ i = j \ \mathrm{and} \ S= T \\
0 & \textup{otherwise}.
\end{cases}
\]
\end{prop}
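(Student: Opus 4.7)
The strategy is to mirror the proof of Proposition \ref{pullmap1} by using the forgetful map $\pi\colon \cpp_g\to\mm_{g,n}$ and then to refine the result according to the Prym structure. I would begin with the vanishings, which all follow from $\mathrm{Im}(\pi_2)$ avoiding the corresponding divisor: $\psi_j$ for $j\geq n-s+1$ is constant along the fixed $Y$; the boundary classes $\delta_{j,S}$ and $\delta_{j,S:g-j}$ with $T\not\subseteq S$ or with $j$ outside the stated range cannot be reached by any degeneration of $C$; and $\pi_2^*\delta_0''=0$ because any non-disconnecting node in an image curve comes from $C$, and at such a node the Prym line bundle restricts to $\eta_Y$ on the $Y$-side of the normalisation, which is non-trivial, so the image cannot lie in $\Delta_0''$.

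For the non-vanishing formulas I would exploit the commutative square
\[
\begin{tikzcd}
\cplow_{g-i} \arrow{r}{\pi_2} \arrow[swap]{d}{\pi'} & \cpp_g \arrow{d}{\pi} \\
\mm_{g-i,n-s+1} \arrow{r}{\tilde\pi_1} & \mm_{g,n}
\end{tikzcd}
\]
where $\pi'$ is the forgetful map and $\tilde\pi_1$ is the clutching map of Proposition \ref{pullmap1}. Applying the identity $\pi_2^*\circ\pi^* = (\pi')^*\circ\tilde\pi_1^*$ to each class of $\mm_{g,n}$, together with the fibre description of $\pi$ over the boundary, recovers the pullbacks of $\lambda$, of $\psi_j$ with $j\leq n-s$, and of the sum-totals over the Prym refinements of each boundary class. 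The problem is then reduced to deciding which source boundary divisor of $\cplow_{g-i}$ maps into which refinement of the target.

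This last step is a direct geometric inspection. A $\Delta_0^{\mathrm{ram}}$-degeneration of $C$ introduces an exceptional component in the image, giving $\pi_2^*\delta_0^{\mathrm{ram}} = \delta_0^{\mathrm{ram}}$. A $\Delta_0'$- or $\Delta_0''$-degeneration of $C$ produces a non-disconnecting node in the image without exceptional component, and the Prym line bundle on the normalisation at this node is non-trivial by the $Y$-side argument above, placing the image in $\Delta_0'$ and yielding $\pi_2^*\delta_0' = \delta_0'+\delta_0''$. The analogous analysis at a disconnecting-node degeneration of $C$ produces the two summands of the formula for $\pi_2^*\delta_{j,S:g-j}$ via the exceptional-versus-non-exceptional dichotomy. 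Finally, the self-intersection case $(j,S)=(i,T)$ is the standard normal-bundle formula: $\mathrm{Im}(\pi_2)\subseteq \Delta_{i,T:g-i}$, and the pullback of its class is $-\psi_{n-s+1}$ coming from the smoothing parameter at the node $x\sim y$. The main obstacle I expect is the bookkeeping needed to identify each source boundary with the correct refinement of the target, especially because $\delta_{j,S:g-j}$ and $\delta_{g-j,S^c:j}$ denote the same divisor on $\cpp_g$ and one has to be careful about which labelling to apply to the piece containing the fixed $Y$.
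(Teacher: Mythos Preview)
Your proposal is correct and follows essentially the same approach as the paper: both rely on the commutative square with the forgetful maps $\pi,\pi'$ to $\mm_{g,n}$ and $\mm_{g-i,n-s+1}$, using the known description of the clutching pullback on $\mm_{g,n}$ to control the total pullback through $\pi$, and then resolving the individual Prym refinements. The paper's proof is terser---it invokes linear independence of the boundary classes in $\mathrm{Pic}(\cpp_g)$ (checked with test curves) as the mechanism for separating the refinements, whereas you spell out the set-theoretic matching of source boundary components to target ones directly; these are two ways of packaging the same verification, and your explicit inspection of the $\Delta_0'/\Delta_0''$ split via the non-triviality of $\eta_Y$ on the partial normalisation is exactly the content hidden behind the paper's phrase ``the conclusion follows.''
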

\begin{proof}
    We can check using test curves that the boundary classes are linearly independent in $\mathrm{Pic}(\cpp_g)$.  
	Using this and the obvious commutative diagram  
		\[
	\begin{tikzcd}
	\cplow_{g-i} \arrow{r}{\pi_2}  \arrow[swap]{d}{} & \cpp_{g} \arrow{d}{} \\
	\mm_{g-i,n+1-s} \arrow{r}{}& \mm_{g,n}
	\end{tikzcd}
	\] 
	the conclusion follows.
\end{proof} 
Lastly, we have: 
\begin{prop} \label{pullmap3}
	 Let $\pi_3\colon \cplow_{g-i}\rightarrow \cpp_g$ be given as 
	 \[[C,x_1,\ldots,x_{n-s},x, \eta_C] \mapsto [C\cup_{x\sim y}Y,x_1,\ldots,x_n, \eta_C, \OO_Y]\] 
	 where $[Y,y,x_{n-s+1},\ldots,x_n]$ is a generic element of $\cM_{i,s+1}$. The pullback $\pi_3^*\colon \mathrm{Pic}(\cpp_g) \rightarrow \mathrm{Pic}(\cplow_{g-i})$ satisfies: 
	 \[ \pi_3^{*}\lambda = \lambda, \ \pi_3^*\delta_0'' = \delta_0'', \ \pi_3^*\delta_0' = \delta_0', \ \pi_3^*\delta_0^{\mathrm{ram}} = \delta_0^{\mathrm{ram}} \] 
	 \[\pi_3^*\psi_j = \psi_j \ \mathrm{for} \ 1\leq j\leq n-s, \ \pi_3^*\psi_j =0 \ \mathrm{for} \ j\geq n-s+1\]
	 We denote again $T = \left\{n-s+1,\ldots, n\right\}$ and we have: 
	  \[ \pi_3^*\delta_{j,S} = 
	 \begin{cases} 
	 \delta_{j-i, (S\setminus T)\cup\left\{n-s+1\right\}} & \mathrm{when} \ i+1\leq j\leq g \ \mathrm{and} \ T\subseteq S \\
	 \delta_{j,S} & \mathrm{when} \ i+j\leq g, \ S\cap T =\emptyset \ \mathrm{and} \ (j,S)\neq (g-i, T^c) \\
	 -\psi_{n-s+1} & \mathrm{when} \ j=g-i \ \mathrm{and} \ S=T^c \\
	 0 & \textup{otherwise}.
	 \end{cases}
	 \]
	  \[ \pi_3^*\delta_{j,S:g-j} = 
	 \begin{cases} 
	 \delta_{j-i, (S\setminus T)\cup\left\{n-s+1\right\}:g-j} & \mathrm{when} \ i+1\leq j\leq g-1 \ \mathrm{and} \ T\subseteq S \\
	 \delta_{j,S:g-j-i} & \mathrm{when} \ i+j\leq g-1 \ \mathrm{and} \ S\cap T =\emptyset  \\
	 0 & \textup{otherwise}.
	 \end{cases}
	 \]
\end{prop}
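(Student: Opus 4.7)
The plan is to follow the blueprint of Propositions \ref{pullmap1} and \ref{pullmap2} and work with the commutative diagram
\[
\begin{tikzcd}
\cplow_{g-i} \arrow{r}{\pi_3} \arrow[swap]{d}{f} & \cpp_{g} \arrow{d}{\pi} \\
\mm_{g-i,n+1-s} \arrow{r}{\tilde{\pi}_3} & \mm_{g,n}
\end{tikzcd}
\]
where $\tilde{\pi}_3$ is the standard clutching map attaching a fixed generic $[Y, y, x_{n-s+1},\ldots,x_n] \in \cM_{i,s+1}$ at the $(n+1-s)$-th marking. First I would deduce $\pi_3^{*}\lambda = \lambda$ and $\pi_3^{*}\psi_j = \psi_j$ for $1\leq j\leq n-s$ from the diagram combined with the known formulas on $\mm$, while $\pi_3^{*}\psi_j = 0$ for $j \geq n-s+1$ since the cotangent line at $x_j$ is constant along $\mathrm{Im}(\pi_3)$. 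For the three $\delta_0$-type classes, the key observation is that a smoothing parameter in $\cplow_{g-i}$ affects only the $C$-side of the image, and the local $\eta$-type of the smoothed node is preserved by the gluing with the $\eta$-trivial $Y$; together with the fact that $\pi_3$ is unramified along those loci, this yields $\pi_3^{*}\delta_0' = \delta_0'$, $\pi_3^{*}\delta_0'' = \delta_0''$ and $\pi_3^{*}\delta_0^{\mathrm{ram}} = \delta_0^{\mathrm{ram}}$.

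Next I would turn to the split boundary classes. The approach is to combine the identity $(\pi\circ \pi_3)^{*} = (\tilde{\pi}_3 \circ f)^{*}$ (whose right-hand side is known from \cite{cornintersection}) with the standard decomposition of $\pi^{*}$ into a sum of boundary classes of $\cpp_g$, and to separate individual contributions using the linear independence of boundary classes in $\mathrm{Pic}(\cplow_{g-i})$, verified via test curves as in the proof of Proposition \ref{pullmap2}. The case analysis is driven by the position of $T = \{n-s+1,\ldots,n\}$ relative to $S$: when $T \subseteq S$, the $Y$-component is absorbed into the genus-$j$ (non-trivial $\eta$) side of the $\cpp_g$-split, so the corresponding $\cplow_{g-i}$-degeneration has genus $j-i$ carrying the markings $(S\setminus T) \cup \{n-s+1\}$; when $S \cap T = \emptyset$, the $Y$-component is absorbed into the opposite side, leaving the $(j,S)$-split of $C$ unchanged; and when $S$ straddles $T$ and $T^c$, the configuration does not lie in $\mathrm{Im}(\pi_3)$ and the pullback vanishes. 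For $\delta_{j, S:g-j}$ (both sides non-trivial), the absorbed component keeps its non-trivial $\eta$ inherited from the corresponding piece of $C$, accounting for the two non-vanishing branches in the formula.

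The exceptional $-\psi_{n-s+1}$ term arises for $\delta_{j,S}$ when $(j,S) = (g-i,T^c)$: here $\mathrm{Im}(\pi_3)$ is dense in the divisor $\Delta_{g-i,T^c} \subseteq \cpp_g$ (indeed, the $\eta$-non-trivial component of the image is exactly $C$, of genus $g-i$, carrying the markings $T^c = \{1,\ldots,n-s\}$), so $\pi_3^{*}\delta_{g-i,T^c}$ computes the self-intersection of $\mathrm{Im}(\pi_3)$, which by the standard normal-bundle computation for clutching maps equals minus the $\psi$-class at the gluing point on $\cplow_{g-i}$, namely $-\psi_{n-s+1}$. The analogous case $(j,S) = (i,T)$ for $\delta_{j,S:g-j}$ is ruled out because it would force non-trivial $\eta$ on the genus-$i$ side, contradicting $\eta|_Y \cong \OO_Y$. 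The main obstacle is the bookkeeping in the second paragraph: one has to verify, for every pair $(j,S)$, which boundary divisors of $\cpp_g$ actually contribute to the pullback and that the coefficients are exactly $1$ (no extra ramification), which depends on the genericity of $(Y, y, x_{n-s+1},\ldots,x_n)$ and on carefully distinguishing the two companion classes $\delta_{j,S}$ and $\delta_{j,S:g-j}$ that share the same image in $\mm_{g,n}$.
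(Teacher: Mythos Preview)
Your proposal is correct and follows essentially the same approach as the paper: the paper's proof consists of the single sentence ``This follows analogously to Proposition \ref{pullmap1} and Proposition \ref{pullmap2},'' i.e.\ exactly the commutative-diagram argument with the clutching map $\tilde{\pi}_3\colon \mm_{g-i,n+1-s}\to\mm_{g,n}$ and the separation of contributions via linear independence of boundary classes that you outline. Your write-up simply makes explicit the bookkeeping (the role of $T$ versus $S$, the self-intersection producing $-\psi_{n-s+1}$, and why the $(i,T)$ case for $\delta_{j,S:g-j}$ does not occur) that the paper leaves implicit.
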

 \begin{proof}
 	This follows analogously to Proposition \ref{pullmap1} and Proposition \ref{pullmap2}. 
 \end{proof}
 As we do not know if $\psi_1,\ldots, \psi_n$ and $\lambda$ generate $\mathrm{Pic}(\pp_g)$ we start by proving that the class $[PD_{\underline{d}}]$ is a linear combination of these classes in $\mathrm{Pic}(\pp_g)$. 
 
 \begin{prop} \label{smoothdivisors}Let $\underline{d} = (d_1,\ldots, d_n)$ be a partition of $g-1$ with all entries positive. We have the following equality in $\mathrm{Pic}(\pp_g)$:
 	\[ [PD_{\underline{d}}] = \sum_{j=1}^n \frac{d_j(d_j+1)}{2}\psi_j - \lambda \]
 \end{prop}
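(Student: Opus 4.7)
The plan is to realize $PD_{\underline d}$ as the degeneracy locus of a map between two vector bundles of the same rank $g-1$, and compute the resulting Chern class difference. Let $\pi \colon \mathcal{X} \to \mathcal{C}^n\mathcal{R}_g$ denote the universal curve with disjoint sections $\sigma_1, \ldots, \sigma_n$ and universal Prym line bundle $\eta$ on $\mathcal{X}$, satisfying $\eta^{\otimes 2} \cong \mathcal{O}_\mathcal{X}$ by the defining property of the Prym moduli stack. Set $S_i = \sigma_i(\mathcal{C}^n\mathcal{R}_g)$ and $D = \sum_i d_i S_i$, a relative divisor of degree $g - 1$. Since $\eta$ is a non-trivial $2$-torsion on each fiber we have $h^0(X, \eta) = 0$, hence $R^1\pi_*(\omega_\pi \otimes \eta) = 0$ by Serre duality, and both
\[ \mathcal{E} := \pi_*(\omega_\pi \otimes \eta), \quad \mathcal{F} := \pi_*\bigl((\omega_\pi \otimes \eta)|_D\bigr) \]
are vector bundles of rank $g - 1$ on $\mathcal{C}^n\mathcal{R}_g$. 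The restriction map $\mathcal{E} \to \mathcal{F}$ drops rank exactly on $PD_{\underline d}$, so
\[ [PD_{\underline d}] \;=\; c_1(\mathcal{F}) - c_1(\mathcal{E}). \]

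For $c_1(\mathcal{E})$ I would apply Grothendieck--Riemann--Roch. With $K = c_1(\omega_\pi)$ and $L = c_1(\eta)$, the relation $2L = 0$ in $\mathrm{Pic}(\mathcal{X})$ forces $L = 0$ in $\mathrm{Pic}_{\mathbb{Q}}(\mathcal{X})$, so the degree-$2$ part of $\mathrm{ch}(\omega_\pi \otimes \eta)\cdot \mathrm{Td}(T_\pi)$ collapses to $K^2/12$. Pushing forward and invoking Mumford's relation $12\lambda = \kappa_1$ on the open part of the moduli space yields $c_1(\mathcal{E}) = \lambda$.

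For $c_1(\mathcal{F})$, the disjointness of the $S_i$ gives $\mathcal{F} = \bigoplus_i \pi_*\bigl((\omega_\pi \otimes \eta)|_{d_i S_i}\bigr)$. Filtering each summand by the subsheaves $(\omega_\pi \otimes \eta)(-k S_i)|_{d_i S_i}$ for $k = 0, 1, \ldots, d_i$, and using $\mathcal{O}_\mathcal{X}(-S_i)|_{S_i} \cong \sigma_i^*\omega_\pi$, one obtains line-bundle graded pieces $\sigma_i^*(\omega_\pi \otimes \eta) \otimes (\sigma_i^*\omega_\pi)^{\otimes k}$ for $k = 0, \ldots, d_i - 1$. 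Since $\sigma_i^*\eta$ is again rationally trivial, each such piece has first Chern class $(k+1)\psi_i$, and
\[ c_1\bigl(\pi_*((\omega_\pi \otimes \eta)|_{d_i S_i})\bigr) \;=\; \sum_{k=0}^{d_i-1}(k+1)\,\psi_i \;=\; \frac{d_i(d_i+1)}{2}\,\psi_i. \]
Assembling the summands and subtracting $c_1(\mathcal{E}) = \lambda$ produces the claimed identity. The only delicate point is the global triviality of $\eta^{\otimes 2}$ on $\mathcal{X}$, which is what permits treating $L$ as zero throughout; this holds on the Prym moduli stack by construction, and passing to the coarse moduli is harmless since we work with $\mathbb{Q}$-coefficients.
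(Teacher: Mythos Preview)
Your argument is correct and follows the same underlying strategy as the paper: exhibit $PD_{\underline d}$ as the degeneracy locus of a morphism between two rank $g-1$ bundles and read off the class as a difference of first Chern classes. The execution differs in two small ways worth recording. The paper works in the Serre-dual picture, using the connecting map from $p_{1*}\bigl(\mathcal P(\sum d_i\Delta_i)/\mathcal P\bigr)$ into $R^1p_{1*}\mathcal P$ and computing the whole class at once via Grothendieck--Riemann--Roch applied to $p_{1!}\bigl(p_2^*\mathcal P(\sum d_i\Delta_i)\bigr)$; you instead use the evaluation map $\pi_*(\omega_\pi\otimes\eta)\to\pi_*\bigl((\omega_\pi\otimes\eta)|_D\bigr)$, apply GRR only to the first bundle, and handle the jet bundle $\mathcal F$ by the filtration of $\mathcal O_{d_iS_i}$ with graded pieces $\mathcal O_{S_i}(-kS_i)$. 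This makes the emergence of $\sum_{k=0}^{d_i-1}(k+1)\psi_i=\tfrac{d_i(d_i+1)}{2}\psi_i$ completely transparent and avoids expanding the full GRR integrand. Secondly, the paper restricts to the automorphism-free locus $\mathcal R_g^0$ so that a genuine Poincar\'e bundle exists on the coarse space, while you work on the stack; as you note, both are equivalent over $\mathbb Q$.
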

\begin{proof}
	We work over the locus $\cR^0_g$ of smooth Prym curves of genus $g$ without automorphisms. We consider the Cartesian diagram 
	
		\[
	\begin{tikzcd}
	\cC^{n+1} \coloneqq \pp_g^0\times_{\cR_g^0}\mathcal{C}^1\mathcal{R}_g^0 \arrow{r}{p_2}  \arrow[swap]{d}{p_1} & \mathcal{C}^1\mathcal{R}_g^0 \arrow{d}{p} \\
	\pp^0_g \arrow{r}{}& \cR_g^0
	\end{tikzcd}
	\]
Because the Prym curves have no automorphisms, it follows that there exists a line bundle $\mathcal{P}$ on $\mathcal{C}^1\mathcal{R}_g^0$ restricting to $\eta$ over each fiber $p^{-1}([X,\eta])$. We denote by $\Delta_i$ the diagonal of $\cC^{n+1}$ parametrizing points $[C,x,x_1,\ldots, x_n]$ satisfying $x=x_i$, and consider the short exact sequence 
\[ 0 \rightarrow p_2^*\mathcal{P} \rightarrow p_2^*\mathcal{P}\otimes \OO_{\cC^{n+1}}\text{\large (}\sum_{i=1}^{n}d_i\Delta_i\text{\large )} \rightarrow p_2^*\mathcal{P} \otimes \text{\Large (}\OO_{\cC^{n+1}}\text{\large (}\sum_{i=1}^{n}d_i\Delta_i     \text{\large )}
/\OO_{\cC^{n+1}}\text{\Large )} \rightarrow 0 \]
We pushforward this by $p_{1*}$ and obtain the exact sequence: 
\begin{multline*}
 0  \rightarrow 
 p_{1*}\text{\Large(}p_2^*\mathcal{P}\otimes \OO_{\cC^{n+1}}(\sum_{i=1}^{n}d_i\Delta_i) \text{\Large)} \rightarrow p_{1*}\text{\Large(}p_2^*\mathcal{P} \otimes \text{\large (}\OO_{\cC^{n+1}}(\sum_{i=1}^{n}d_i\Delta_i)/\OO_{\cC^{n+1}}\text{\large )}\text{\Large)}\xrightarrow{\alpha}
 R^1p_{1*}p_2^*\mathcal{P}\rightarrow\cdots \\ \cdots\rightarrow R^1p_{1*}\text{\Large(}p_2^*\mathcal{P}\otimes \OO_{\cC^{n+1}}\text{\large (}\sum_{i=1}^{n}d_i\Delta_i\text{\large )} \text{\Large)} \rightarrow 0
\end{multline*} 
We have that $p_{1*}\text{\Large(}p_2^*\mathcal{P} \otimes \text{\large (}\OO_{\cC^{n+1}}(\sum_{i=1}^{n}d_i\Delta_i)/\OO_{\cC^{n+1}}\text{\large )}\text{\Large)}$ is a vector bundle of rank $g-1$ with fiber over a point $[X,x_1,\ldots,x_n,\eta]\in \pp_g$ given as $H^0(X, \eta\otimes \text{\large (}\OO_X(\sum_{i=1}^nd_ix_i)/\OO_X)\text{\large )}$. Similarly $R^1p_{1*}p_2^*\mathcal{P}$ is a vector bundle of rank $g-1$ with fiber $H^1(X, \eta)$ over $[X,x_1,\ldots,x_n,\eta]$. The map $\alpha$ restricted to the fiber over $[X,x_1,\ldots,x_n, \eta]$ is the one induced by the exact sequence: 
\[ 0\rightarrow \eta \rightarrow \eta\text{\large(}\sum_{i=1}^nd_ix_i\text{\large)} \rightarrow \eta\otimes \text{\Large (}\OO_X(\sum_{i=1}^nd_ix_i)/\OO_X)\text{\Large )} \rightarrow 0 \]

The Riemann-Roch Theorem implies that $[PD_{\underline{d}}]$ is the degeneration locus of the map $\alpha$ and consequently
\[ [PD_{\underline{d}}] = -c_1\text{\Large(}p_{1!}\text{\large(}p_2^*\mathcal{P}\otimes \OO_{\cC^{n+1}}(\sum_{i=1}^{n}d_i\Delta_i)\text{\large)}\text{\Large )} \]

We apply the Grothendieck-Riemann-Roch formula and obtain 
\[ [PD_{\underline{d}}] = -p_{1*}\text{\Huge(}\frac{\text{\large(}c_1(p_2^*\mathcal{P}) +\sum_{i=1}^{n}d_i\Delta_i\text{\large)}^2}{2} - \frac{c_1(\omega_{p_1})\cdot \text{\large(}c_1(p_2^*\mathcal{P}) +\sum_{i=1}^{n}d_i\Delta_i\text{\large)} }{2} + \frac{c_1(\omega_{p_1})^2}{12}  \text{\Huge )}\]
But $\mathcal{P}^{\otimes2} \cong \OO_{\mathcal{C}^1\mathcal{R}_g}$ and we conclude that $2c_1(\mathcal{P}) = 0$. Since the torsion terms disappear in the rational Picard group, we have the equality 
\[ [PD_{\underline{d}}] = -p_{1*}\text{\Large(} \frac{(\sum_{i=1}^{n}d_i\Delta_i)^2}{2} - \frac{(\sum_{i=1}^{n}d_i\Delta_i)\cdot c_1(\omega_{p_1})}{2}+ \frac{c_1(\omega_{p_1})^2}{12}  \text{\Large )}   \] 
hence 
\[  [PD_{\underline{d}}] = \sum_{i=1}^n\frac{d_i^2}{2}\psi_i + \sum_{i=1}^n\frac{d_i}{2}\psi_i - \lambda \]

\end{proof}

We are now ready to compute the class $[\overline{PD}_{\underline{d}}]$ in $\mathrm{Pic}(\cpp_g)$. 

\textbf{Proof of Theorem \ref{prymquad}:} We will denote the Prym-canonical class $[\overline{PD}_{\underline{d}}]$ by 
	\[ [\overline{PD}_{\underline{d}}] = \sum_{j=1}^n \frac{d_j(d_j+1)}{2}\psi_j -\lambda - b_0'\delta_0'-b_0''\delta_0'' - b_0^{\mathrm{ram}}\delta_0^{\mathrm{ram}} - \sum_{\substack{1\leq i \leq g \\  S\subseteq\left\{1,\ldots,n\right\}}} b_{i,S} \delta_{i,S} - \sum_{\substack{1\leq i \leq g-i \\  S\subseteq \left\{1,\ldots,n\right\} }} b_{i,S:g-i} \delta_{i,S:g-i}. \]
Our goal is to compute the coefficients of the boundary divisors. 

We define 
\[ \mathcal{H}^2_g(2\underline{d}, 2^{g-1}) \coloneqq \left\{[C,x_1,\ldots,x_{g+n-1}]\in \cM_{g,g+n-1} \ | \ \OO_C(\sum_{i=1}^{n}2d_i x_i + \sum_{i=n+1}^{g+n-1}2x_i) \cong \omega_C^{\otimes2}\right\} \]
and denote by $\mathcal{Q}_g(2\underline{d}, 2^{g-1})$ the component of $\mathcal{H}^2_g(2\underline{d}, 2^{g-1})$ parametrizing divisors that are not twice the divisor of a holomorphic differential.

We consider the morphism $\mathcal{Q}_g(2\underline{d}, 2^{g-1})\rightarrow \mathcal{C}^n\mathcal{R}_g$ defined as \[[C,x_1,\ldots,x_{g+n-1}] \mapsto [C,x_1,\ldots, x_n, \omega_C\otimes\OO_C(-\sum_{i=1}^{n}d_i x_i -\sum_{i=n+1}^{g+n-1}x_i)]\]
and we immediately observe that the image of this map is the divisor $PD_{\underline{d}}$. Consequently, the closure $\overline{PD}_{\underline{d}}$ in $\cpp_g$ is well understood, see \cite{Daweik-diffcomp}, and we can use the method of \cite[Proposition 1.4]{Mullanek-diff} to compute its class. 

In order to conclude the proof, we first set some notations. For a partition $\underline{m} = (m_1,\ldots,m_n)$ of $g$ with all entries positive, we consider the stratum 
\[ \mathcal{H}_g(\underline{m}, 1^{g-2}) = \left\{[C,x_1,\ldots,x_{g+n-2}]\in \cM_{g,g+n-2} \ | \ \OO_C(\sum_{i=1}^{n}m_i x_i + \sum_{i=n+1}^{g+n-2}x_i) \cong \omega_C\right\} \]
We consider the map $\mathcal{H}_g(\underline{m}, 1^{g-2})\rightarrow \cM_{g,n}$ forgetting the points $x_{n+1}, \ldots, x_{g+n-2}$ and denote $D^g_{\underline{m}}$ its image. We have a similar approach in the case when $\underline{m}$ is a length $n$ partition of $g-1$ with at least one negative entry. We denote 
\[ \mathcal{H}_g(\underline{m}, 1^{g-1}) = \left\{[C,x_1,\ldots,x_{g+n-1}]\in \cM_{g,g+n-1} \ | \ \OO_C(\sum_{i=1}^{n}m_i x_i + \sum_{i=n+1}^{g+n-1}x_i) \cong \omega_C\right\} \]
and we consider the map to $\cM_{g,n}$ forgetting the last $g-1$ entries. We denote its image by $D^g_{\underline{m}}$.

In the notations of Proposition \ref{pullmap1}, Proposition \ref{pullmap2} and Proposition \ref{pullmap3}, we have 
\[ \pi_2^*[\overline{PD}_{\underline{d}}] = PD_{\underline{d'}} + \textup{Boundary divisors}, \] \[\pi_3^*[\overline{PD}_{\underline{d}}] = PD_{\underline{d'}} + \textup{Boundary divisors, and}  \]
 \[ \pi_1^*[\overline{PD}_{\underline{d}}] = 
\begin{cases} D^{n+1-s}_{\underline{d}''} + \textup{Boundary terms}
 & \mathrm{when} \ d_T\geq 2i-2 \\
D^{n+1-s}_{\underline{d}'} + \textup{Boundary terms} & \mathrm{when} \ d_T \leq 2i-4
\end{cases}
\]
where $\underline{d}' = (d_1,\ldots, d_{n-s}, d_T-i)$ and $\underline{d}'' = (d_1,\ldots, d_{n-s}, d_T+1-i)$.
These equalities are satisfied as a consequence of the proof of \cite[Proposition 1.4]{Mullanek-diff}. Because we can consider a multitude of variations of the maps $\pi_1, \pi_2$ and $\pi_3$, and because we computed the coefficients of the $\psi_j$'s in Proposition \ref{smoothdivisors}, we conclude that: 
\[ b_{i,S:g-i} = \frac{(d_S-i)(d_S-i+1)}{2} \ \textup{and}\]
 \[ b_{i,S} = 
\begin{cases} \frac{(d_S-i+1)(d_S-i+2)}{2}
& \mathrm{when} \ d_S\geq i-1 \\
\frac{(d_S-i)(d_S-i+1)}{2} & \mathrm{when} \ d_S \leq i-2
\end{cases}
\]
Moreover, because $\delta_0$ is not one of the boundary terms appearing in the pullback $\pi_1^*$, as remarked in \cite[Proposition 1.4]{Mullanek-diff}, we deduce that $b_0' = 0$. A similar argument for the map $\pi_2^*$ implies further that $b_0'' = b_0' = 0$. The pushforward of the class $[\overline{PD}_{\underline{d}}]$ was computed in \cite[Proposition 1.4]{Mullanek-diff}. We use this to conclude $b_0^{\mathrm{ram}} = \frac{1}{4}$, thus completing the proof.  
\hfill $\square$

\begin{rmk}
	All the Prym-canonical divisorial strata $PD_{\underline{d}}$ are irreducible, see \cite{Lanneau}.
\end{rmk}

We consider the moduli space $\mathcal{Q}_g(\mu)$ parametrizing divisors of quadratic differentials with zero multiplicities given by the partition $\mu$. It is clear that if $g\geq 22$ and $n\geq g$, the stratum $\mathcal{Q}_g(\mu)$ is of general type, because it maps with finite fibers to $\mathcal{M}_{g,l(\mu)-g}$. Mapping to $\mathcal{C}^{l(\mu)-g}\mathcal{R}_g$ instead allows us to find examples of strata of general type in genus as low as $13$. 
\begin{rmk} Let $\mathcal{Q}_g(\mu)$ a stratum with all entries of $\mu$ even and $l(\mu)\geq g$. If $\mathcal{R}_g$ is of general type, then $\mathcal{Q}_g(\mu)$ is also of general type. Similarly, if we allow $\mu$ to have two odd entries and assume $l(\mu)\geq g+2$ we get that $\mathcal{Q}_g(\mu)$ is of general type when $\mathcal{R}_{g,2}$ is.
\end{rmk}

\bibliography{main}
\bibliographystyle{alpha}
\Addresses
\end{document}